\newcommand{\AG}{A(G)}
\newcommand{\Soc}{\operatorname{Soc}}
\newcommand{\sd}{\triangle}
\newcommand{\NN}{\mathbb{N}}
\newcommand{\ZZ}{\mathbb{Z}}
\newcommand{\K}{\mathcal{K}}
\newcommand{\T}{\mathcal{T}}
\newcommand{\EE}{\mathscr{E}}
\newcommand{\bt}{\mathbf{t}}
\newcommand{\mm}{\mathfrak{m}}
\newcommand{\p}{\mathrm{p}}
\newcommand{\set}[1]{\left \{ #1 \right \}}
\newcommand{\ts}{\textstyle}
\newtheorem{theorem}{Theorem}[section]
\newtheorem{lemma}[theorem]{Lemma}
\newtheorem{prop}[theorem]{Proposition}
\newtheorem{cor}[theorem]{Corollary}
\theoremstyle{definition}
\newtheorem{definition}[theorem]{Definition}
\newtheorem{notation}[theorem]{Notation}
\newtheorem{rmk}[theorem]{Remark}
\newtheorem{rmks}[theorem]{Remarks}
\newtheorem{example}[theorem]{Example}
\begin{document}

\title[On the socle of Artinian algebras associated to graphs]%
{On the socle of Artinian\\ algebras associated to graphs}

\author{Jorge Neves}
\address{Univ.~Coimbra, CMUC, Department of Mathematics, 3001-501 Coimbra, Portugal.}
\email{neves@mat.uc.pt}

\thanks{This work was partially supported by the Centre for Mathematics of the University of Coimbra - UIDB/00324/2020,
funded by the Portuguese Government through FCT/MCTES. The author uses the computer algebra system Macaulay2, \cite{M2}, 
in the computations of examples.}

\makeatletter
\@namedef{subjclassname@2020}{%
  \textup{2020} Mathematics Subject Classification}
\makeatother

\keywords{Standard Artinian graded algebras, socle, $T$-joins}
\subjclass[2020]{13E10, 13H10, 05E40; 05C70}

\begin{abstract}
Given a simple graph, consider the polynomial ring with coefficients in a field 
and variables identified with the edges of the graph. 
Given a non-empty even cardinality Eulerian subgraph and a choice of half of its edges, 
consider the homogeneous binomial obtained by taking the product of these 
edges minus the product of the remaining edges of the subgraph. 
We define a homogeneous ideal by taking as generators all binomials obtained in this way, 
varying the Eulerian subgraph and the choice of half of its edges, 
together with the squares of the variables of the ring. This ideal is related to 
the Eulerian ideal, introduced by Neves, Vaz Pinto and Villarreal. We call the corresponding quotient 
the Eulerian Artinian algebra associated to the graph. The goal of the present work is to study the socle
of these algebras through the lens of graph theory. Our main results include a combinatorial 
characterization of a monomial basis of the socle, a characterization of Gorenstein Eulerian Artinian 
algebras in the case of bipartite graphs and the computation of the $h$-vector and socle degrees in the cases 
of a complete graph and a complete bipartite graph. 
\end{abstract}
\maketitle

\section{Introduction}

Let $R=K[t_1,\dots,t_s]$ be a standard graded polynomial ring over a field $K$. 
A standard Artinian graded algebra, $A=R/I$, is a $0$-dimensional quotient of $R$ by a homogeneous ideal 
$I\subseteq R$. 
These graded rings have been studied from many points of view; one example 
is the study of their Lefschetz properties, especially in the Gorenstein and in the level case 
(\emph{cf.~\cite{Na22,Go17,GoZa18,HaWaWa19,McCuSe20},} 
for a small sample of recent works and \cite{MiNa13}, for a survey).
In this article, we study the socle of standard Artinian graded algebras associated to simple graphs.
By identifying the set of edges of a simple graph with the set of variables of $R$, 
we consider $I$ of the form:
$$
I = (\EE, t_1^2,\dots,t_s^2), 
$$
where $\EE$ is a set of binomials obtained from the even cardinality Eulerian subsets of edges of the graph. 
These standard Artinian graded algebras, which we will call \emph{Eulerian Artinian algebras}, 
are canonically related to the Eulerian ideal of the graph, introduced 
in \cite{NeVPVi20}. 
Our aim is to give a combinatorial characterization of the socle of an 
Eulerian Artinian algebra.

\medskip

In section~\ref{sec: prelim}, building upon the results of \cite{NeVa22} and \cite{Ne23} for the Eulerian ideal, 
we obtain a combinatorial characterization of a monomial basis for an Eulerian Artinian algebra, in terms 
of the graph (\emph{cf}.~Proposition~\ref{prop: K-basis in terms of reduced parity joins}). 
The monomials of this basis are square-free monomials that identify sets of edges of the graph 
containing no more than half the edges of any even cardinality Eulerian subgraph, 
subject to an additional condition coming from an 
ordering of the edge set. These distinguished sets of edges are called \emph{reduced parity joins} 
(\emph{cf}.~Definition~\ref{def: reduced parity join}) and they form a set  in bijection with the set 
of pairs $(T,p)$, where $T$, a set of vertices of the graph, and $p$, an element $\ZZ_2$, are 
such that there exists a $T$-join of cardinality of parity $p$ 
(\emph{cf}.~Proposition~\ref{prop: bijection reduced parity joins and pairs} and 
Corollary~\ref{cor: Hilbert series and reduced parity joins}).
In Sections~\ref{sec: complete} and \ref{sec: complete bipartite}, 
using these characterizations, we compute the \mbox{$h$-vector} of an 
Eulerian Artinian algebra for complete and complete bipartite graphs. 
In section~\ref{sec: socle}, we study the socle of an Eulerian Artinian algebra. 
We start by showing that it is generated over $K$ by monomial residues 
(\emph{cf}.~Proposition~\ref{thm: Soc of J is generated by monomials}) and then give a combinatorial 
characterization of the sets of edges that the monomials in the socle basis identify 
(\emph{cf}.~Theorem~\ref{thm: characterization of socle}).
As an application, we show that if the graph is bipartite, 
the only Gorenstein Eulerian Artinian algebras are the complete intersection ones, 
that are obtained when $\EE$, above, is empty (\emph{cf}.~Theorem~\ref{thm: Gorenstein for bipartite}).

\section{Preliminaries}\label{sec: prelim}

\subsection{Setup}
Throughout, $G$ will denote a simple, undirected graph. 
The vertex set of $G$ will be denoted by $V_G$ and the edge set, which we regard as a non-empty set 
of subsets of $V_G$, will be denoted by $E_G$. We associate to $G$ the polynomial ring 
$$K[E_G] = K[t_e : e\in E_G],$$ 
where $K$ is any field. If $\alpha \in \NN^{E_G}$, the monomial given by the product 
of $t_e^{\alpha(e)}$, when  $e$ varies in $E_G$, will be denoted by $\bt^\alpha$. 
If $J\subseteq E_G$ is any set of edges, we will also use 
$\bt_J$ for the product of all variables identifying edges in $J$.
\medskip 

Let $C\subseteq E_G$ and $v\in V_G$. The degree of $v$ in $C$ is defined by 
$$
\ts \deg_C(v) = \sum\limits_{e\in C} |\set{v}\cap e\,|.
$$
A subset, $C$, is called Eulerian if $\deg_C(v)$ is even, for every $v\in V_G$. 
Let $J,L\subseteq E_G$ be non-empty. The binomial $\bt_J-\bt_L$ is called \emph{Eulerian} 
if $J\cap L = \emptyset$, $|J|=|L|$ and $C=J\cup L$ is Eulerian. 
Let us denote the set of Eulerian binomials by $\EE$.

\begin{definition}\label{def: saga of G}
If $G$ is any simple graph, define $I(G) = (\EE \cup \set{t_e^2 : e \in E_G})$ 
and define the \emph{Eulerian Artinian algebra} associated to $G$ by $\AG = K[E_G]/I(G)$.
\end{definition}

If the only Eulerian subset of edges of even cardinality is the empty set, which 
is the case when $G$ is a forest or a graph with a unique odd cycle, then  
$$
\AG=K[E_G]/(t_e^2 : e\in E_G),
$$ 
which is a well-known complete intersection. 

\subsection{Eulerian ideal}\label{subsec: Eulerian ideal and Artinian algebra}
Consider 
$K[V_G]=K[x_i : i\in V_G]$ and let $\varphi \colon K[E_G] \to K[V_G]$ be the ring homomorphism 
defined by $\varphi(t_e) = x_ix_j$ for every $e=\set{i,j}\in E_G$.
The Eulerian ideal of $G$ was defined in \cite[Definition~1.1]{NeVPVi20} as
$$
I(X_G) = \varphi^{-1} (x_i^2-x_j^2 : i,j\in V_G).
$$
It follows from \cite[Proposition~2.5]{NeVPVi20} that  
$I(X_G) = (\EE\cup \{t_e^2-t_f^2 : e,f\in E_G\})$. In other words,
$I(G)$ and $I(X_G)$ are related by 
$I(G) = I(X_G) + (t_e^2)$, for any choice of $e\in E_G$.

\subsection{A Gr\"obner basis}
In the approach of \cite{Ne23,NeVa22}, the first step in the study of the Eulerian ideal 
$I(X_G)$ was to describe a Gr\"obner basis. As we explain next, 
a Gr\"obner basis for $I(G)$ can be easily derived from 
the results of \cite{Ne23,NeVa22}. 

\begin{definition}\label{def: monomial order}
Given an ordering of the edge set $E_G=\{e_1,\dots, e_s\}$, define $\succeq$, the 
associated graded reversed lexicographic order on $K[E_G]$, by postulating that 
$\bt^\alpha \succ \bt^\beta$, where $\alpha\not =\beta \in \NN^{E_G}$, if and only if $\deg(\bt^\alpha) > \deg(\bt^\beta)$
or $\deg(\bt^\alpha)=\deg(\bt^\beta)$ and the last nonzero coordinate of 
$(\alpha(e_1)-\beta(e_1),\dots,\alpha(e_s)-\beta(e_s))$ is negative.
\end{definition}

\begin{prop}\label{prop: Grobner basis of IG}
Given an ordering of $E_G$, the set  
$\EE \cup \{t_e^2 : e \in E_G\}$ is a Gr\"obner basis for $I(G)$ with respect 
to the associated grevlex order. 
\end{prop}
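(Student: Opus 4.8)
The plan is to deduce the statement from the Gr\"obner basis of the Eulerian ideal $I(X_G)$ furnished by \cite{Ne23,NeVa22}, together with the identity $I(G)=I(X_G)+(t_{e_s}^2)$ recorded above, where $e_s$ denotes the \emph{smallest} variable in the chosen ordering (the relation holds for any edge, so I am free to use $e_s$). Let $\mathcal G_X$ be the grevlex Gr\"obner basis of $I(X_G)$ from those references; its leading monomials are the leading monomials of the Eulerian binomials $\EE$ together with the squares $t_{e_1}^2,\dots,t_{e_{s-1}}^2$ arising as leading terms of the relations $t_{e_i}^2-t_{e_j}^2$. I will then verify the normal-form criterion for the candidate set $\EE\cup\{t_e^2:e\in E_G\}$: it suffices to show that every $f\in I(G)$ has normal form $0$ modulo this set.

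First, given $f\in I(G)$, let $\bar f$ be a normal form of $f$ with respect to $\EE\cup\{t_e^2\}$, so that $\bar f\in I(G)$. Since the leading monomials $t_e^2$ divide every non--square-free monomial, $\bar f$ is a $K$-linear combination of square-free monomials. I claim $\bar f$ is \emph{also} reduced with respect to $\mathcal G_X$: no term of $\bar f$ is divisible by a leading monomial of $\EE$ (these are among the monomials already used to form $\bar f$), and, being square-free, no term is divisible by any $t_{e_i}^2$. Writing $\bar f=u+v\,t_{e_s}^2$ with $u\in I(X_G)$ and applying the $\mathcal G_X$-normal-form operator, which is $K$-linear and kills $u$, I obtain that $\bar f$ equals the $\mathcal G_X$-normal form of $v\,t_{e_s}^2$.

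The key step is to show that this normal form is divisible by $t_{e_s}^2$, which forces $\bar f=0$ since $\bar f$ is square-free. Here I use that $e_s$ is the smallest variable: by the definition of $\succeq$, whenever $\bt^\alpha\succ\bt^\beta$ have equal total degree, the exponent of $t_{e_s}$ in $\bt^\alpha$ is at most its exponent in $\bt^\beta$. Hence in every reduction step modulo $\mathcal G_X$ a monomial $M$ is replaced by monomials whose $t_{e_s}$-exponent is at least that of $M$. As every monomial of $v\,t_{e_s}^2$ has $t_{e_s}$-exponent at least $2$, this property is an invariant of the whole reduction, so the resulting normal form $\bar f$ is divisible by $t_{e_s}^2$; being square-free, it must vanish. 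This establishes the normal-form criterion and proves that $\EE\cup\{t_e^2:e\in E_G\}$ is a Gr\"obner basis for $I(G)$.

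I expect the main obstacle to be precisely this monotonicity of the smallest-variable exponent under reduction and its interaction with the two types of leading terms: one must be sure that the reductions coming from the Eulerian binomials in $\EE$ (whose leading monomials are square-free and, in grevlex, avoid $t_{e_s}$) cannot strip the factor $t_{e_s}^2$, while the reductions coming from the difference-of-squares relations only raise the $t_{e_s}$-degree. Once this is in place the rest is routine—that normal forms with respect to $\EE\cup\{t_e^2\}$ are square-free, and that such a square-free normal form is automatically $\mathcal G_X$-reduced. (One could instead run Buchberger's criterion directly, but then the $S$-pairs among the Eulerian binomials would essentially reprove the Gr\"obner basis of $I(X_G)$, so routing through $\mathcal G_X$ is more economical.)
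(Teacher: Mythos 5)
Your argument is correct, and it shares the paper's skeleton --- both proofs feed off the identity $I(G)=I(X_G)+(t_\ell^2)$ for the last (smallest) edge $\ell=e_s$ and the Gr\"obner basis of $I(X_G)$ from \cite[Theorem 3.3]{Ne23} --- but the certification step is genuinely different. The paper observes that $t_\ell$ divides no initial term of $\EE\cup\{t_e^2-t_f^2\}$, so every $S$-pair involving $t_\ell^2$ has coprime leading terms and Buchberger's criterion immediately yields that $\EE\cup\{t_e^2-t_f^2\}\cup\{t_\ell^2\}$ is a Gr\"obner basis of $I(G)$; it then replaces the difference-of-squares binomials by the pure squares, since the two sets have the same initial terms. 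You instead run the normal-form criterion: a candidate-reduced $\bar f\in I(G)$ is square-free, hence also $\mathcal G_X$-reduced, and writing $\bar f=u+vt_{e_s}^2$ you use linearity of the $\mathcal G_X$-normal form together with the observation that grevlex reduction by homogeneous binomials never decreases the exponent of the smallest variable to force $t_{e_s}^2\mid\bar f$, whence $\bar f=0$. All the steps you flag as delicate do check out: the leading monomial of $t_{e_i}^2-t_{e_j}^2$ is $t_{e_{\min(i,j)}}^2$, so $t_{e_s}^2$ is never a leading term of $\mathcal G_X$; the leading monomial of an Eulerian binomial $\bt_J-\bt_L$ avoids the last edge of $J\cup L$ and hence avoids $t_{e_s}$; and for equal-degree monomials $\bt^\alpha\succ\bt^\beta$ in grevlex one indeed has $\alpha(e_s)\le\beta(e_s)$. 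Your monotonicity lemma and the paper's coprimality observation are two faces of the same fact (grevlex is well adapted to the smallest variable); the paper's route is shorter, while yours avoids $S$-pairs entirely and works directly with the candidate set, at the cost of invoking linearity and uniqueness of normal forms with respect to $\mathcal G_X$.
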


\begin{proof}
By \cite[Theorem 3.3]{Ne23} the set $\EE \cup \{t_e^2-t^2_f : e,f \in E_G\}$ is a Gr\"obner basis for 
the Eulerian ideal $I(X_G)$, with respect to the grevlex order. 
Consider $\ell \in E_G$, the last edge. Then, as no initial term of an element of this Gr\"obner basis 
is divisible by $t_\ell$, by Buchberger's criterion, we deduce that 
\begin{equation}\label{eq: L233}
\EE \cup \{t_e^2-t^2_f : e,f \in E_G\} \cup \set{t_\ell^2} 
\end{equation}
is a Gr\"obner basis for $I(G)=I(X_G)+(t_\ell^2)$. Since   
the initial terms of members of the set $\EE \cup \{t_e^2 : e\in E_G\}$ generate the same 
ideal as those of \eqref{eq: L233}, this
set is also Gr\"obner basis for $I(G)$.
\end{proof}

Let us denote the symmetric difference of sets by $\sd$.

\begin{cor}\label{cor: membership of square-free monomials}
Let $M\subseteq E_G$ be any set of edges. Then $\bt_M \in I(G)$ if and only if 
there exists an even cardinality Eulerian subset $C\subseteq E_G$ such that 
$|M\sd C|<|M|$.
\end{cor}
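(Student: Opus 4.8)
The plan is to read off membership of the square-free monomial $\bt_M$ directly from the Gr\"obner basis $\EE\cup\{t_e^2:e\in E_G\}$ of Proposition~\ref{prop: Grobner basis of IG}. Since this is a Gr\"obner basis for the grevlex order $\succeq$, we have $\bt_M\in I(G)$ if and only if the remainder of $\bt_M$ upon division by it is $0$. Because every generator is a binomial or a monomial, the division algorithm applied to a monomial produces, up to sign, a single monomial at each stage: reduction by a square $t_e^2$ sends any multiple of it to $0$, while reduction by an Eulerian binomial $\bt_J-\bt_L$ with $\bt_J\succ\bt_L$ replaces a multiple $\bt_J\,\bt_{N\setminus J}$ (where $J\subseteq N$) by $\bt_L\,\bt_{N\setminus J}$. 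The argument rests on one bookkeeping observation: as long as no square is created, such a binomial step sends the square-free set $N$ to $N\sd(J\cup L)$ and preserves cardinality, since $J\subseteq N$, $J\cap L=\emptyset$ and $L\cap(N\setminus J)=\emptyset$ force $L\cap N=\emptyset$, while $J\cup L$ is an even cardinality Eulerian set. I will also use freely that the symmetric difference of even cardinality Eulerian sets is again even cardinality Eulerian.

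For the implication $(\Leftarrow)$ I would argue directly, without invoking the reduction. Suppose $C$ is an even cardinality Eulerian set with $|M\sd C|<|M|$. Writing $|C|=2m$, the identity $|M\sd C|=|M|+|C|-2|M\cap C|$ shows the hypothesis is equivalent to $|M\cap C|>m$. Hence I may choose $J\subseteq M\cap C$ with $|J|=m$ and put $L=C\setminus J$, so that $\bt_J-\bt_L\in\EE$ (note $C\neq\emptyset$, so $m\ge 1$ and $J,L$ are non-empty). Then $\bt_M-\bt_{M\setminus J}\,\bt_L=\bt_{M\setminus J}(\bt_J-\bt_L)\in I(G)$, whence $\bt_M\equiv\bt_{M\setminus J}\,\bt_L$ modulo $I(G)$. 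Since $|M\cap C|>m=|J|$, there is an edge in $(M\cap C)\setminus J\subseteq L\cap(M\setminus J)$, so the monomial $\bt_{M\setminus J}\,\bt_L$ is divisible by some $t_e^2$ and therefore lies in $I(G)$; thus $\bt_M\in I(G)$.

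The implication $(\Rightarrow)$ is the substantive one, and I expect the main obstacle to be that $\bt_M\in I(G)$ is strictly stronger than $\bt_M\in\operatorname{in}_{\succeq}(I(G))$: divisibility of $\bt_M$ by a single leading term $\bt_J$ only yields $J\subseteq M$ with no control on $L\cap M$, and if $L\cap M=\emptyset$ the cardinality does not drop. One must instead follow the entire reduction to $0$. Given $\bt_M\in I(G)$, the grevlex reduction cannot terminate at a nonzero square-free standard monomial, as this would contradict membership; hence a square is created at some step. Let $\bt_{N_0}=\bt_M,\bt_{N_1},\dots,\bt_{N_r}$ be the square-free monomials produced before the first such step, with $N_{i+1}=N_i\sd C_{i+1}$ and each $C_{i+1}$ even cardinality Eulerian as above, so that $N_r=M\sd C_1\sd\cdots\sd C_r$ and $|N_r|=|M|$ (here $r\ge 0$, the case $r=0$ being allowed). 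At the next step a binomial $\bt_J-\bt_L$ with $J\subseteq N_r$ and $L\cap(N_r\setminus J)\neq\emptyset$ is used; set $C_{r+1}=J\cup L$.

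It then remains to assemble the desired set. Put $C:=C_1\sd\cdots\sd C_{r+1}$, which is even cardinality Eulerian. By associativity of $\sd$ one has $M\sd C=N_r\sd C_{r+1}$, and since $J\subseteq N_r$, $J\cap L=\emptyset$ and $|J|=|L|$, a direct count gives $|M\sd C|=|N_r\sd C_{r+1}|=|N_r|-2\,|L\cap N_r|$. As $L\cap N_r\supseteq L\cap(N_r\setminus J)\neq\emptyset$, we obtain $|M\sd C|\le|N_r|-2<|N_r|=|M|$, exhibiting the required even cardinality Eulerian subset and completing the proof. The only points needing care are the invariance $|N_i|=|M|$ across the square-free steps and the fact that composing the symmetric differences transfers the cardinality drop at the terminal step back to $M$ itself.
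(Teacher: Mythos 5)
Your proof is correct and follows essentially the same route as the paper: the backward direction is the same explicit splitting of $C$ into $J\subseteq M\cap C$ and $L=C\setminus J$ producing a square, and the forward direction tracks the Gr\"obner reduction of $\bt_M$ through square-free stages $N_i=N_{i-1}\sd C_i$ of constant cardinality until a square appears, then takes $C=C_1\sd\cdots\sd C_{r+1}$. Your write-up of the forward direction is somewhat more detailed than the paper's (which compresses the bookkeeping into one displayed inequality), but the argument is the same.
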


\begin{proof}
Suppose that $\bt_M$ is divisible by the leading term 
of an element of $\EE$. Without loss of generality, 
let this be $\bt_J-\bt_L$, with $J\cap L = \emptyset$, $|J|=|L|$ and 
$C=J\cup L$ an Eulerian subset, with last edge in $L$. 
Then $\bt_J\mid \bt_M\iff J\subseteq M$ and  
\begin{equation}\label{eq: L289}
\bt_M = \bt_{M\setminus J}(\bt_J-\bt_L) + \bt_{M\setminus J}\bt_{L}.
\end{equation}
If $\bt_{M\setminus J}\bt_{L}$ is a square-free monomial then
$$
M\cap C = J\iff (M\setminus J)\cup L = M\sd C,
$$
so that $\bt_{M\setminus J}\bt_{L} = \bt_{M\sd C}$ and 
$|M|=|M\sd C|$. If $\bt_{M\setminus J}\bt_{L}$ is not square-free then 
$$
|M\cap C|>|J| = |C|/2  \iff |M\sd C| < |M|.
$$

If there exists an even cardinality Eulerian subset $C\subseteq E_G$ with 
$$
|M\sd C|<|M| \iff |M\cap C|>|C|/2,
$$ 
we may choose $J\subseteq M\cap C$, a set of cardinality $|C|/2$ that 
does not include the last edge of $C$. Then $\bt_J-\bt_L$ is an Eulerian binomial as above and in \eqref{eq: L289}
the monomial $\bt_{M\setminus J}\bt_{L}$ is divisible by a square of a variable. Therefore $\bt_M \in I(G)$.
Conversely, if $\bt_M\in I(G)$, then $\bt_M$ has remainder zero in its standard expression with respect 
to the Gr\"obner basis of Proposition~\ref{prop: Grobner basis of IG}. Let us compute 
this remainder by first dividing only by the elements of $\EE$ until we get a monomial that is not square-free. 
Let $C_1,\dots,C_m$, with $m\geq 1$ be the corresponding even cardinality Eulerian subsets. By our initial argument,  
$$
\ts |(M\sd C_1\sd \cdots \sd C_{m-1})\sd C_m| < |M\sd C_1\sd \cdots \sd C_{m-1}|=|M|.
$$
Let $C=C_1\sd C_2 \sd \cdots \sd C_m$, which is also an even cardinality Eulerian subset. Then 
$|M\sd C|<|M|$.
\end{proof}

Subsets $M\subseteq E_G$ for which the condition in Corollary~\ref{cor: membership of square-free monomials}
is not satisfied are called parity joins. 

\begin{definition}[{\emph{cf}.~\cite[Definition 4.11]{Ne23}}]\label{def: parity join}
A subset $J\subseteq E_G$ is called a parity join if, 
for every Eulerian subset $C\subseteq E_G$ of even cardinality,
$$
\ts |J\cap C|\leq  \frac{|C|}{2}\iff |J\sd C|\geq|J|.
$$
\end{definition}

It follows from Corollary~\ref{cor: membership of square-free monomials} 
that the maximum of $d$, for which $\AG_d$ is nonzero, which is called the 
\emph{maximum socle degree} of $\AG$, is the maximum cardinality of a parity join of the graph.

\subsection{A monomial basis}
If $I$ is an ideal in a polynomial ring endowed with a monomial order then, 
by Macaulay's theorem (\emph{cf}.~\cite[Theorem~2.6]{EnHe12}),
the set of residues of monomials that do not belong to the initial ideal of $I$ is a $K$-basis 
for the quotient of the polynomial ring by the ideal. In our case, fixing any ordering of $E_G$, 
a monomial not in the initial ideal of $I(G)$ is necessarily of the form $\bt_J$
for some parity join $J\subseteq E_G$, but not conversely. 
The following notion was introduced in \cite[Definition 14]{NeVa22}.

\begin{definition}\label{def: reduced parity join}
Given an ordering of $E_G$, a parity join $J\subseteq E_G$ is said \emph{reduced}
if, whenever $C\subseteq E_G$ is a non-empty, even cardinality Eulerian subset and 
$$
|J\sd C|=|J| \iff |J\cap C| = |C|/2,
$$
then $J$ contains the last edge in $C$.
\end{definition}

\begin{prop}\label{prop: K-basis in terms of reduced parity joins}
Given any ordering of $E_G$, 
the set of monomial residues given by $\bt_J + I(G)$, with $J$ in the set of reduced parity joins of $G$, 
is a $K$-basis of $\AG$.
\end{prop}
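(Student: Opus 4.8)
The plan is to show that the monomial residues $\{\bt_J + I(G) : J \text{ a reduced parity join}\}$ form a $K$-basis by establishing, via Macaulay's theorem, that they are precisely the residues of monomials lying outside the initial ideal $\operatorname{in}(I(G))$. By Proposition~\ref{prop: Grobner basis of IG}, the initial ideal is generated by $\{t_e^2 : e \in E_G\}$ together with the leading terms of the Eulerian binomials. The presence of the squares $t_e^2$ forces every monomial outside $\operatorname{in}(I(G))$ to be square-free, hence of the form $\bt_J$ for some $J \subseteq E_G$. So the task reduces to proving the set-theoretic identity
\begin{equation*}
\{\bt_J : \bt_J \notin \operatorname{in}(I(G))\} = \{\bt_J : J \text{ is a reduced parity join}\}.
\end{equation*}

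\medskip

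First I would unpack what it means for $\bt_J$ to lie outside $\operatorname{in}(I(G))$: it means $\bt_J$ is not divisible by the leading term of any generator in the Gr\"obner basis $\EE \cup \{t_e^2 : e \in E_G\}$. Divisibility by some $t_e^2$ is impossible since $\bt_J$ is square-free, so the only constraint comes from the Eulerian binomials. For an Eulerian binomial $\bt_{J'} - \bt_{L'}$ with $C = J' \cup L'$, $|J'| = |L'| = |C|/2$, and last edge of $C$ lying in $L'$ (so that $\bt_{J'}$ is the leading term), divisibility $\bt_{J'} \mid \bt_J$ means $J' \subseteq J$. I would argue that $\bt_J \in \operatorname{in}(I(G))$ iff there exists a non-empty even-cardinality Eulerian subset $C$ and a half-subset $J' \subseteq J \cap C$ with $|J'| = |C|/2$ avoiding the last edge of $C$, which happens precisely when either $|J \cap C| > |C|/2$, or $|J \cap C| = |C|/2$ but $J$ misses the last edge of $C$.

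\medskip

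The second half of the argument is then to match this up with the two defining conditions. The condition ``$|J \cap C| \le |C|/2$ for all even Eulerian $C$'' is exactly the parity-join property of Definition~\ref{def: parity join}; this rules out the first way of landing inside $\operatorname{in}(I(G))$. The condition ``whenever $|J \cap C| = |C|/2$, the set $J$ contains the last edge of $C$'' is exactly the reducedness of Definition~\ref{def: reduced parity join}; this rules out the second way. Conjunctively, $\bt_J \notin \operatorname{in}(I(G))$ iff $J$ is a reduced parity join, which is the desired identity. The conclusion that the corresponding residues form a $K$-basis is then immediate from Macaulay's theorem as recalled in the paragraph preceding the statement.

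\medskip

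I expect the main obstacle to be bookkeeping the interaction between the two separate clauses of the parity/reducedness conditions and the single divisibility test on leading terms, in particular verifying that one may always choose the half-subset $J'$ to avoid the designated last edge when $|J \cap C| > |C|/2$ (so that an Eulerian binomial with leading term dividing $\bt_J$ genuinely exists), while no such choice exists in the boundary case $|J \cap C| = |C|/2$ when $J$ already contains the last edge. This is essentially the combinatorial content already worked out in the proof of Corollary~\ref{cor: membership of square-free monomials}, so I would lean on that computation rather than redo it, and the remaining work is the clean translation between ``not in the initial ideal'' and ``reduced parity join.''
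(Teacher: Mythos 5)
Your argument is correct, but it is not the paper's route: the paper disposes of this proposition with a bare citation to \cite[Theorem 15]{NeVa22}, whereas you give a self-contained derivation from Proposition~\ref{prop: Grobner basis of IG} and Macaulay's theorem. The key identity you need --- that for square-free $\bt_J$ one has $\bt_J\in\operatorname{in}(I(G))$ if and only if some non-empty even Eulerian $C$ satisfies either $|J\cap C|>|C|/2$, or $|J\cap C|=|C|/2$ with $J$ missing the last edge of $C$ --- does hold, and the ``main obstacle'' you flag resolves exactly as you hope: the leading term of $\bt_{J'}-\bt_{L'}$ is the half not containing the last edge of $C$ (its exponent vector has a negative last nonzero difference, so it is grevlex-larger), and when $|J\cap C|\geq |C|/2+1$ one can always discard the last edge from $J\cap C$ and still select a half-set $J'\subseteq J\cap C$ of size $|C|/2$ avoiding it, while in the boundary case $|J\cap C|=|C|/2$ with the last edge in $J$ no such $J'$ exists. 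Negating over all $C$ recovers precisely Definitions~\ref{def: parity join} and~\ref{def: reduced parity join}. What your approach buys is transparency: it makes the proposition a direct corollary of the Gr\"obner basis already established in this paper, rather than an import, and it isolates exactly where the ``last edge'' tie-breaking in the definition of reducedness comes from. What it costs is that it silently reproves (in the special case at hand) the content of the cited result, so in the paper's economy the citation is shorter; but as a proof it is complete and valid.
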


\begin{proof}
See \cite[Theorem 15]{NeVa22}.
\end{proof}

One can give a second characterization this $K$-basis of $\AG$ 
using the related notion of $T$-join of a graph.  
Let $T\subseteq V_G$ and $J\subseteq E_G$. We say that $J$ is a $T$-join if 
and only if 
$$
T=\{v\in V_G : \deg_J(v) \text{ is odd}\}.
$$
Accordingly, a subset $C\subseteq E_G$ is Eulerian if and only if $C$ is
an $\emptyset$-join.
Given $T\subseteq V_G$, 
one can show that there exists a $T$-join if and only if the cardinality of the intersection
of $T$ with the vertex set of every connected component of $G$ is even 
(\emph{cf}.~\cite[Proposition~12.7]{KoVy18}). 

\begin{notation}
Let $\T_G$ denote the set of $T\subseteq V_G$ for which there exists a $T$-join.
\end{notation} 

\noindent
Fix $T\in \T_G$ and let $J_1$ and $J_2$ be two $T$-joins. Then
$J_1\sd J_2$, is an $\emptyset$-join (\emph{cf}., for example, \cite[Proposition~12.6]{KoVy18}); 
in other words, $J_1\sd J_2$ is an Eulerian subset. Since 
$$
|J_1\sd J_2| \equiv_2 |J_1| + |J_2|,
$$
if there exist $T$-joins of cardinalities of opposite parity, then the graph contains an odd cycle, 
i.e., it is non-bipartite. The converse is also true (\emph{cf}., for example, \cite[Lemma~4.6]{Ne23}).

\begin{definition}\label{def: T p joins}
Let $T\subseteq V_G$, $p\in \ZZ_2$ and $J\subseteq E_G$.
\begin{enumerate}
\item $J$ is called a $(T,p)$-join if $J$ is a $T$-join and $|J|+2\ZZ = p$.
\item Let $\T_{\p,G}$ denote the set of $(T,p)\in \T_G\times \ZZ_2$
admitting a $(T,p)$-join.
\item If $(T,p)\in \T_{\p,G}$, denote $\tau(T,p) = \min \{|J| : \text{$J$ is a $(T,p)$-join} \}$.
\item Define $\theta(J) = (T,p)\in \T_{\p,G}$, by setting 
$T=\{v\in V_G : \deg_J(v) \text{ is odd}\}$ and $p = |J|+2\ZZ$. 
\end{enumerate}
\end{definition}

If $G$ is non-bipartite, then $\T_{\p,G} = \T_G\times \ZZ_2$, while 
if $G$ is bipartite, then, for each $T\in \T_G$, the set $\T_{\p,G}$ contains exactly one of the pairs 
$(T,0)$ or $(T,1)$ and thus the cardinality of a $T$-join is then determined by $T$. 
For a bipartite $G$, we abbreviate $\tau(T,p)$ to $\tau(T)$.
\medskip 

The following lemma follows from the elementary property 
of $T$-joins used above (\emph{cf}.~\cite[Proposition~12.6]{KoVy18}) 
and the formula for the cardinality of a symmetric difference.

\begin{lemma}\label{lem: symetric difference of Ti-joins}
If $J_1$ is a $(T_1,p_1)$-join and $J_2$ is a $(T_2,p_2)$-join, then  
$J_1\sd J_2$ is a $(T_1\sd T_2,p_1+p_2)$-join. In particular, if $J_1$ and $J_2$ are 
two $(T,p)$ joins, then there exists and even cardinality Eulerian subset $C\subseteq E_G$ 
such that $J_2=J_1\sd C$. 
\end{lemma}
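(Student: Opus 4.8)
The plan is to reduce the whole statement to two congruences modulo $2$: one governing the vertex degrees, which controls the $T$-part, and one governing the cardinality, which controls the parity part $p$. Both rest on the same elementary decomposition of $J_1$ and $J_2$ relative to their intersection.

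First I would establish the key congruence for degrees. For any vertex $v$ and any two edge sets $A,B\subseteq E_G$, decomposing $A$, $B$ and $A\sd B$ along the disjoint pieces $A\cap B$, $A\setminus B$, $B\setminus A$ gives
$$
\deg_A(v)+\deg_B(v) = 2\deg_{A\cap B}(v) + \deg_{A\sd B}(v),
$$
so that $\deg_{A\sd B}(v)\equiv \deg_A(v)+\deg_B(v)\pmod 2$. Applying this with $A=J_1$ and $B=J_2$, the degree $\deg_{J_1\sd J_2}(v)$ is odd precisely when exactly one of $\deg_{J_1}(v)$, $\deg_{J_2}(v)$ is odd, that is, precisely when $v\in T_1\sd T_2$. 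By the definition of a $T$-join, this shows $J_1\sd J_2$ is a $(T_1\sd T_2)$-join.

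Next I would deal with the parity. From the cardinality formula $|J_1\sd J_2| = |J_1|+|J_2| - 2|J_1\cap J_2|$ one reads off $|J_1\sd J_2|\equiv |J_1|+|J_2|\pmod 2$, so the class of $|J_1\sd J_2|$ in $\ZZ_2$ equals $p_1+p_2$. Combined with the previous paragraph, this yields that $J_1\sd J_2$ is a $(T_1\sd T_2,p_1+p_2)$-join, which is the first assertion.

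Finally, the \emph{in particular} clause is immediate. Taking $J_1$ and $J_2$ to be two $(T,p)$-joins and setting $C=J_1\sd J_2$, the first part gives that $C$ is a $(T\sd T,p+p)=(\emptyset,0)$-join, hence an Eulerian subset of even cardinality. Since $J_1\sd C = J_1\sd(J_1\sd J_2) = J_2$ by associativity of the symmetric difference, this exhibits the desired $C$. I do not expect a genuine obstacle; the only point requiring care is the bookkeeping in the degree identity, where one must track the factor of $2$ contributed by the edges of $A\cap B$ so that the reduction modulo $2$ comes out correctly.
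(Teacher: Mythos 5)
Your proof is correct and follows essentially the same route as the paper, which simply cites the elementary fact that the symmetric difference of a $T_1$-join and a $T_2$-join is a $(T_1\sd T_2)$-join together with the cardinality formula $|J_1\sd J_2|=|J_1|+|J_2|-2|J_1\cap J_2|$; you have merely written out the degree-parity computation behind the cited fact. Both the bookkeeping in your degree identity and the derivation of the \emph{in particular} clause are sound.
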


\begin{rmk}\label{rmk: parity joins as minimum cardinality T-joins}
Let $J\subseteq E_G$ and let $(T,p)=\theta(J)$. 
The lemma above can be used to show that 
$J$ is a parity join if and only if $\tau(T,p)=|J|$, i.e., 
parity joins are the same as minimum cardinality $(T,p)$-joins.
\end{rmk}

Finally, the next result provides a second combinatorial characterization of the monomial basis of $\AG$
of Proposition~\ref{prop: K-basis in terms of reduced parity joins}, in terms of $T$-joins.

\begin{prop}\label{prop: bijection reduced parity joins and pairs}
Fix an ordering of $E_G$. For every $(T,p)\in \T_{\p,G}$,
there exists a unique reduced parity join among all $(T,p)$-joins. 
\end{prop}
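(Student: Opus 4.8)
The plan is to reformulate the reduced condition entirely in terms of the grevlex order and then invoke that a total order on a finite set has a unique minimum. First I would fix $(T,p)\in\T_{\p,G}$ and isolate the relevant set of candidates. By Remark~\ref{rmk: parity joins as minimum cardinality T-joins}, the parity joins that are $(T,p)$-joins are precisely the minimum cardinality $(T,p)$-joins, all of common cardinality $\tau(T,p)$; let $\mathcal{O}$ denote this finite, non-empty set. By Lemma~\ref{lem: symetric difference of Ti-joins}, if $J,J'\in\mathcal{O}$ then $J'=J\sd C$ for some even cardinality Eulerian subset $C$, and since $|J'|=|J|$ the cardinality formula forces $|J\cap C|=|C|/2$ (I will call such a $C$ \emph{balanced} for $J$). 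Conversely, for any balanced even cardinality Eulerian $C$ one has $|J\sd C|=|J|$ and $J\sd C$ is again a $(T,p)$-join, hence in $\mathcal{O}$. So $C\mapsto J\sd C$ is a bijection from the balanced even cardinality Eulerian subsets onto $\mathcal{O}$, and $\mathcal{O}\setminus\{J\}$ corresponds exactly to the \emph{non-empty} such $C$.

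The key step is to translate Definition~\ref{def: reduced parity join} into a statement about $\succeq$. Fix $J\in\mathcal{O}$ and a non-empty even cardinality Eulerian $C$ that is balanced for $J$, and set $J'=J\sd C$. Comparing the exponent vectors of $\bt_{J'}$ and $\bt_J$, the coordinates that differ are exactly those indexed by edges of $C$: the difference is $-1$ on $J\cap C$, is $+1$ on $C\setminus J$, and is $0$ off $C$. Hence the last nonzero coordinate occurs at the last edge $\ell_C$ of $C$, and by Definition~\ref{def: monomial order} it is negative precisely when $\ell_C\in J$. Thus $\bt_J\prec\bt_{J\sd C}$ if and only if $J$ contains the last edge of $C$. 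Quantifying over all non-empty balanced $C$, this shows that $J\in\mathcal{O}$ is a reduced parity join if and only if $\bt_J\prec\bt_{J'}$ for every $J'\in\mathcal{O}$ with $J'\neq J$; that is, reduced parity joins in the class $(T,p)$ are exactly the grevlex-minimal elements of the finite set $\set{\bt_J : J\in\mathcal{O}}$.

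Finally I would conclude. The monomials $\bt_J$ for $J\in\mathcal{O}$ are pairwise distinct (distinct square-free monomials) and share the common degree $\tau(T,p)$, so $\succeq$ restricts to a total order on this finite non-empty set, which therefore has a unique minimum. By the equivalence just established, that minimum is the unique reduced parity join among the $(T,p)$-joins, giving both existence and uniqueness. The main obstacle I expect is purely bookkeeping rather than conceptual: getting the sign in the grevlex comparison exactly right (so that ``$J$ contains $\ell_C$'' matches $\bt_J\prec\bt_{J\sd C}$ and not the reverse), and verifying that the family of balanced Eulerian subsets quantified over in Definition~\ref{def: reduced parity join} coincides precisely with the orbit $\mathcal{O}$ via the bijection $C\mapsto J\sd C$ — in particular that every balanced $C$ keeps us inside the minimum cardinality class, which is where Remark~\ref{rmk: parity joins as minimum cardinality T-joins} and Lemma~\ref{lem: symetric difference of Ti-joins} are doing the real work.
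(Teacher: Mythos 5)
Your argument is correct and complete. The paper itself does not prove this proposition; it simply cites \cite[Proposition~22]{NeVa22}, so you are supplying a self-contained derivation where the text defers to a reference. Your route is the natural one: restrict to the set $\mathcal{O}$ of minimum-cardinality $(T,p)$-joins (legitimate, since by Remark~\ref{rmk: parity joins as minimum cardinality T-joins} any parity join that is a $(T,p)$-join lies there), use Lemma~\ref{lem: symetric difference of Ti-joins} to set up the bijection $C\mapsto J\sd C$ between non-empty balanced Eulerian subsets and $\mathcal{O}\setminus\{J\}$, and then observe that the exponent vector of $\bt_{J\sd C}-\bt_J$ is supported exactly on $C$ with last nonzero entry at the last edge $\ell_C$, so that the ``$J$ contains $\ell_C$'' clause of Definition~\ref{def: reduced parity join} is precisely the statement $\bt_J\prec\bt_{J\sd C}$. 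The sign check is right: with $\alpha$ the exponent of $\bt_{J\sd C}$ and $\beta$ that of $\bt_J$, the last nonzero coordinate of $\alpha-\beta$ is $-1$ exactly when $\ell_C\in J$, which by Definition~\ref{def: monomial order} gives $\bt_{J\sd C}\succ\bt_J$. Reducedness within the class $(T,p)$ thus coincides with being the grevlex minimum of the finite set of pairwise distinct, equal-degree monomials $\set{\bt_{J'} : J'\in\mathcal{O}}$, and existence and uniqueness follow from totality of the order. What this buys, beyond making the paper self-contained, is an explicit description of the distinguished representative: it is the grevlex-least monomial in its $(T,p)$-class, which also makes transparent why these are exactly the standard monomials underlying Proposition~\ref{prop: K-basis in terms of reduced parity joins}.
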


\begin{proof}
See \cite[Proposition~22]{NeVa22}.
\end{proof}

It follows that the unique reduced parity join in the set of $(T,p)$-joins has  
cardinality equal to $\tau(T,p)$ and the restriction of $\theta$ to the set 
reduced parity joins of $G$ is a bijection with $\T_{\p,G}$. Using this and 
Proposition~\ref{prop: K-basis in terms of reduced parity joins}, we obtain the following corollary. 

\begin{cor}\label{cor: Hilbert series and reduced parity joins}
The Hilbert series of $\AG$, in the variable $z$, is the sum of all $z^{\tau(T,p)}$
as $(T,p)$ varies in $\T_{\p,G}$ and, fixing any ordering of the set of edges, it 
is the sum of all $z^{|J|}$ where $J$ varies in the set of reduced parity joins of $G$.
\end{cor}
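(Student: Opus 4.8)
The plan is to read off both expressions for the Hilbert series directly from the monomial basis furnished by Proposition~\ref{prop: K-basis in terms of reduced parity joins}. Recall that the Hilbert series of $\AG$ is $\sum_{d\geq 0} \dim_K \AG_d\, z^d$. Since the residues $\bt_J + I(G)$, as $J$ ranges over the reduced parity joins of $G$, form a $K$-basis of $\AG$, and since each $\bt_J = \prod_{e\in J} t_e$ is a square-free monomial of degree $|J|$, the basis elements landing in degree $d$ are precisely those indexed by reduced parity joins of cardinality $d$. Hence $\dim_K \AG_d$ equals the number of reduced parity joins of cardinality $d$, and summing over $d$ yields the second expression, namely $\sum_J z^{|J|}$ with $J$ ranging over the reduced parity joins. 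This step is where I would start, as it requires nothing beyond the homogeneity of $I(G)$ and the degree computation for $\bt_J$.

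For the first expression, I would re-index this sum along the bijection $\theta$ between the set of reduced parity joins of $G$ and $\T_{\p,G}$. This bijection is exactly what Proposition~\ref{prop: bijection reduced parity joins and pairs} and the remark immediately following it provide: each pair $(T,p)\in \T_{\p,G}$ corresponds to a unique reduced parity join, and that join has cardinality $\tau(T,p)$. Substituting $|J| = \tau(T,p)$ and reindexing $\sum_J z^{|J|}$ through $\theta$ then gives $\sum_{(T,p)\in \T_{\p,G}} z^{\tau(T,p)}$, which is the first expression. The equality of the two sums is purely formal once the bijection is in hand.

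The only point that genuinely demands care — and it has already been settled by the cited results — is that the grading of $\AG$ by polynomial degree matches the grading of the indexing set by edge-cardinality; this is immediate because $I(G)$ is homogeneous and $\deg \bt_J = |J|$. I therefore do not expect any real obstacle: the corollary is a direct translation of the $K$-basis of Proposition~\ref{prop: K-basis in terms of reduced parity joins} and the bijection of Proposition~\ref{prop: bijection reduced parity joins and pairs} into generating-function form, with all the substantive combinatorics having been carried out in those earlier statements.
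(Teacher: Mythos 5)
Your proposal is correct and follows exactly the route the paper takes: the second expression is read off from the monomial basis of Proposition~\ref{prop: K-basis in terms of reduced parity joins}, and the first is obtained by reindexing along the bijection $\theta$ from Proposition~\ref{prop: bijection reduced parity joins and pairs}, using that the unique reduced parity join among the $(T,p)$-joins has cardinality $\tau(T,p)$. The paper presents this as an immediate consequence of the preceding discussion rather than as a separate proof, and your write-up fills in the same steps.
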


Let us end this section by using Corollary~\ref{cor: Hilbert series and reduced parity joins} 
to compute the $h$-vector of $\AG$ for graphs admitting a single non-empty, even cardinality 
Eulerian subset of edges. We will also use this result, in Sections~\ref{sec: complete} 
and \ref{sec: complete bipartite}, to compute the $h$-vector of $\AG$ for 
a complete graph and a complete bipartite graph, respectively.

\begin{prop}
Suppose that $E_G$ possesses a single non-empty, even car\-di\-na\-li\-ty Eulerian subset of edges, $C\subseteq E_G$.
Let us denote $s=|E_G|$, $c=\nicefrac{|C|}{2}$ and let $(h_0,h_1,\dots,h_r)$ denote the $h$-vector of $\AG$.
Then
$$
\renewcommand{\arraystretch}{1.4}
h_i= 
\left\{
\begin{array}{l}
\binom{s}{i}, \quad \text{if}\quad  0\leq i\leq c-1,\\
\binom{2c-1}{c-1}\binom{s-2c}{i-c}+\sum_{k\geq 1} \binom{2c}{c-k}\binom{s-2c}{i-c+k}, \quad \text{if} \quad i\geq c.\\
\end{array}
\right.
$$
In particular, $r$, the socle degree of $\AG$, is equal to $s-c$.
\end{prop}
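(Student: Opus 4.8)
The plan is to apply Corollary~\ref{cor: Hilbert series and reduced parity joins}, which reduces the computation to a counting problem: fixing any ordering of $E_G$, the coefficient $h_i$ equals the number of reduced parity joins $J\subseteq E_G$ with $|J|=i$. So I would first record the drastic simplification of the defining conditions afforded by the hypothesis. Since $C$ is the \emph{only} non-empty even-cardinality Eulerian subset (the empty set imposing no constraint), the parity-join condition of Definition~\ref{def: parity join} collapses to the single inequality $|J\cap C|\leq c$, and the reducedness condition of Definition~\ref{def: reduced parity join} collapses to the single requirement that, whenever $|J\cap C|=c$, the set $J$ must contain $\ell$, the last edge of $C$ in the chosen ordering. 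Thus a reduced parity join is exactly a subset $J$ with $|J\cap C|\leq c$ that contains $\ell$ in the boundary case $|J\cap C|=c$.

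Next I would count these by stratifying on $k=|J\cap C|$. For $k<c$ the reducedness constraint is vacuous, so the $k$ edges of $J$ inside $C$ are chosen freely ($\binom{2c}{k}$ ways) and the $i-k$ edges outside $C$ freely from the $s-2c$ available ($\binom{s-2c}{i-k}$ ways). For $k=c$ one must include $\ell$ and choose the remaining $c-1$ edges of $J\cap C$ from $C\setminus\set{\ell}$, giving $\binom{2c-1}{c-1}\binom{s-2c}{i-c}$. Summing over the strata yields
\[
h_i=\binom{2c-1}{c-1}\binom{s-2c}{i-c}+\sum_{k=0}^{c-1}\binom{2c}{k}\binom{s-2c}{i-k}.
\]
Re-indexing the remaining sum by $j=c-k$ (so $k=c-j$ runs over $j\geq 1$, with $\binom{2c}{c-j}=0$ once $j>c$) rewrites it as $\sum_{j\geq 1}\binom{2c}{c-j}\binom{s-2c}{i-c+j}$, matching the stated formula in the range $i\geq c$. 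For $0\leq i\leq c-1$ I would observe that the stratum $k=c$ cannot occur, since it forces $|J|\geq c>i$; hence $h_i=\sum_{k=0}^{i}\binom{2c}{k}\binom{s-2c}{i-k}$, and Vandermonde's identity collapses this to $\binom{(2c)+(s-2c)}{i}=\binom{s}{i}$.

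Finally, for the socle degree I would note that every $J$ satisfies $|J|=|J\cap C|+|J\setminus C|\leq c+(s-2c)=s-c$, with equality realized by taking all of $E_G\setminus C$ together with any $c$ edges of $C$ that include $\ell$; this is a genuine reduced parity join, so $h_{s-c}\neq 0$ while $h_i=0$ for $i>s-c$, giving $r=s-c$. I do not anticipate a substantive obstacle here: the only delicate points are faithfully translating the general definitions of parity join and reduced parity join into the single-$C$ setting, and the bookkeeping in the re-indexing that converts $\binom{2c}{k}$ into $\binom{2c}{c-j}$; I would also double-check that the boundary value $i=c$ is treated consistently by both case descriptions.
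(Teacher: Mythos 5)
Your proposal is correct and follows essentially the same route as the paper: both reduce to counting reduced parity joins of each cardinality via Corollary~\ref{cor: Hilbert series and reduced parity joins}, note that the hypothesis collapses the conditions to $|J\cap C|\leq c$ plus the last-edge requirement at equality, and stratify by $|J\cap C|$. The only cosmetic difference is that for $i\leq c-1$ the paper observes directly that every $i$-subset is a reduced parity join, whereas you recover $\binom{s}{i}$ from the stratified sum via Vandermonde's identity.
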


\begin{proof}
If $J\subseteq E_G$ is such that $|J|\leq c-1$ then, necessarily, 
$|J\cap C|< c$ and therefore, under our assumption on the graph, $J$ is a reduced parity join. Hence,
$h_i = \binom{s}{i}$, for all $0\leq i \leq c-1$. If $i\geq c$, then any reduced parity join 
must have at most $c$ edges in $C$ and if this number is maximum, then it must include the last edge 
in $C$. Hence, for $i\geq c$,
$$
\ts h_i = \binom{2c-1}{c-1}\binom{s-2c}{i-c}+\sum_{k\geq 1} \binom{2c}{c-k}\binom{s-2c}{i-c+k}.\qedhere
$$
\end{proof}


\section{The socle of $\AG$}\label{sec: socle}
Let $R$ be a polynomial ring with coefficients in a field $K$, standard graded, and
denote by $\mm$ its irrelevant ideal. If $M$ is a graded module over $R$, the socle of 
$M$ is the graded submodule $\Soc M = \{ u\in M : \mm u  = (0) \}$. The 
degrees of the minimal generators of the socle of a standard Artinian graded algebra determine the last 
term of the minimal graded free resolution of the algebra over the polynomial ring 
(\emph{cf}.~\cite[Lemma~5.3.3]{monalg}). 
In particular, a standard Artinian graded algebra is 
Gorenstein if and only if its socle is $1$-dimensional over $K$. 
The degrees of the minimal generators of the socle module are called the 
\emph{socle degrees}; the dimension of the socle, is called the \emph{type}. A standard Artinian 
graded algebra is called \emph{level} if its socle is generated in a single degree.

\begin{theorem}\label{thm: Soc of J is generated by monomials}
The socle of $\AG$ is generated by residues of monomials. 
\end{theorem}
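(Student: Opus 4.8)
The plan is to exploit the fact that we already have, via Proposition~\ref{prop: K-basis in terms of reduced parity joins}, an explicit monomial $K$-basis of $\AG$ consisting of the residues $\bt_J + I(G)$ as $J$ ranges over the reduced parity joins. The key structural feature is that $I(G)$ is a monomial-plus-binomial ideal whose initial ideal (with respect to any grevlex order fixed by an ordering of $E_G$) is a \emph{monomial} ideal; hence $\AG$ is isomorphic as a graded $K$-vector space to $R/\operatorname{in}(I(G))$, and the socle of a quotient by a monomial ideal is spanned by monomials. The goal is to transfer this monomial structure of the socle back to $\AG$ itself.

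First I would recall that the socle $\Soc\AG$ is a graded $K$-subspace, so it suffices to show each homogeneous socle element is a $K$-combination of monomial residues lying in the socle. Take a nonzero homogeneous $u\in\Soc\AG$ and write it uniquely in the standard monomial basis as $u = \sum_J c_J\,(\bt_J + I(G))$, the sum over reduced parity joins $J$ of a fixed degree $d$ with $c_J\in K$ not all zero. The condition $u\in\Soc\AG$ means $t_e\,u = 0$ in $\AG$ for every $e\in E_G$, i.e. $t_e\,\bt_{J} \in I(G)$ \emph{after} reducing the combination. The main step is to argue that this forces each individual $\bt_J$ with $c_J\neq 0$ to already satisfy $t_e\,\bt_J\in I(G)$ for all $e$, so that each $\bt_J+I(G)$ is itself a socle element.

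To make that precise I would pass to the associated monomial picture: since $\EE \cup \{t_e^2\}$ is a Gröbner basis, normal-form reduction is $K$-linear and sends each $t_e\bt_J$ to its unique remainder, a $K$-combination of standard monomials. For a reduced parity join $J$ and an edge $e$, the product $t_e\bt_J$ is either not square-free (if $e\in J$), in which case $t_e\bt_J\in I(G)$ immediately by the square relation, or equals $\bt_{J\cup\{e\}}$. In the latter case its normal form is supported on standard monomials $\bt_{J'}$ with $|J'|=d+1$, each a reduced parity join, and by Remark~\ref{rmk: parity joins as minimum cardinality T-joins} and Lemma~\ref{lem: symetric difference of Ti-joins} one sees that $\theta(J\cup\{e\})=\theta(J')$ is forced, so \emph{distinct} reduced parity joins $J_1\neq J_2$ (of degree $d$) whose augmentations $J_i\cup\{e\}$ are not annihilated produce normal forms supported on \emph{distinct} standard monomials, because $\theta(J_1\cup\{e\})\neq\theta(J_2\cup\{e\})$ unless $J_1\sd J_2\subseteq\{e\}$, which is impossible for $J_1\neq J_2$ of equal cardinality. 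This disjointness of supports is the crux: it prevents cancellation between the contributions of different $\bt_J$ to $t_e u$, so $t_e u = 0$ forces $t_e\bt_J\in I(G)$ term by term.

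The hard part will be nailing down this no-cancellation argument uniformly over all edges $e$ simultaneously rather than one $e$ at a time, since a cancellation could in principle be arranged to occur for one edge while the socle condition only demands vanishing for each edge separately. I expect the clean way around this is \emph{not} to track supports edge-by-edge but to observe that $t_e u=0$ for a \emph{single} fixed $e$ already forces, by the injectivity of $J\mapsto\theta(J\cup\{e\})$ on the set of $J$ with $e\notin J$ and $J\cup\{e\}$ a parity join, that $c_J=0$ whenever $\bt_{J\cup\{e\}}\notin I(G)$; running this over all $e$ then shows that the surviving $J$ (those with $c_J\neq 0$) satisfy $t_e\bt_J\in I(G)$ for every $e$, i.e. each such monomial residue is individually in the socle, and $u$ is their $K$-span. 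Thus $\Soc\AG$ is spanned by monomial residues, which is the claim.
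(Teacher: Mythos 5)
Your proof takes a genuinely different route from the paper's. Both proofs fix an edge $e$ and reduce the products $\bt_{J_i}t_e$ to normal form with respect to the Gr\"obner basis of Proposition~\ref{prop: Grobner basis of IG}, but they diverge at the crucial point of why the surviving (square-free) normal forms cannot cancel against one another. The paper sidesteps that question entirely: it observes that the surviving terms, multiplied by $t_e$, have reductions modulo $\EE$ summing to zero, hence $(c_1\bt_{J_1}+\cdots+c_k\bt_{J_k})t_e\in I(X_G)$, and then invokes the fact that $t_e$ is a nonzerodivisor on $K[E_G]/I(X_G)$ to force $c_1\bt_{J_1}+\cdots+c_k\bt_{J_k}\in I(X_G)\subseteq I(G)$, a contradiction. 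You instead prove the no-cancellation statement directly, by showing that distinct reduced parity joins $J_1\neq J_2$ with $e\notin J_i$ and $J_i\cup\{e\}$ still a parity join have augmentations whose normal forms are \emph{distinct} standard monomials. This is a viable and arguably more self-contained argument: it stays entirely inside the combinatorics of $(T,p)$-joins and does not need the Cohen--Macaulayness/regularity input from the Eulerian ideal. What the paper's route buys is that it never needs to identify the normal form of $\bt_{J\cup\{e\}}$ explicitly; what your route buys is independence from the external regularity result.

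There is, however, one incorrectly justified step that you must repair. You assert that $\theta(J_1\cup\{e\})=\theta(J_2\cup\{e\})$ can only happen if $J_1\sd J_2\subseteq\{e\}$. That is false: by Lemma~\ref{lem: symetric difference of Ti-joins}, equality of these pairs only forces $J_1\sd J_2$ to be an even cardinality Eulerian subset, which can perfectly well be, say, a $4$-cycle disjoint from $e$. The correct justification is available from the paper's own results and is what your argument actually needs: since $(T,p)\mapsto(T\sd e,p+1)$ is an involution on $\T_{\p,G}$, $\theta(J_1\cup\{e\})=\theta(J_2\cup\{e\})$ is equivalent to $\theta(J_1)=\theta(J_2)$, and because $J_1$ and $J_2$ are both \emph{reduced} parity joins, Proposition~\ref{prop: bijection reduced parity joins and pairs} (injectivity of $\theta$ on reduced parity joins) forces $J_1=J_2$. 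With that substitution, and the observation that the normal form of $\bt_{J\cup\{e\}}$, when nonzero, is $\bt_{J'}$ for the unique reduced parity join $J'$ with $\theta(J')=\theta(J\cup\{e\})$ (reduction modulo the binomials of $\EE$ preserves $\theta$), your no-cancellation argument goes through and the proof is complete. Your closing worry about handling all edges simultaneously is unfounded, as you yourself conclude: the socle condition is imposed edge by edge, and your argument is applied edge by edge.
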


\begin{proof}
Choose an ordering of $E_G$ and consider the associated 
grevlex order on $K[E_G]$. Let $0\not =u\in \Soc \AG$. 
By Proposition~\ref{prop: K-basis in terms of reduced parity joins}, we may write 
$$
u = (c_1\bt_{J_1} + \cdots + c_m\bt_{J_m})+I(G)
$$
for some $J_1,\dots,J_m$, reduced parity joins, $c_i\in K\setminus \{0\}$ and $m>0$.
By assumption, for every $e\in E_G$,
\begin{equation}\label{eq: L531}
c_1\bt_{J_1}t_e + \cdots + c_m\bt_{J_m}t_e \in I(G).
\end{equation}
Fix $e\in E_G$ and, to ease notation, assume that there exists $0\leq k\leq m$ such that  
$c_i\bt_{J_i}t_e \not \in I(G)$, for all $1\leq i\leq k$ and 
$c_i\bt_{J_i}t_e\in I(G)$, for all $k+1\leq i\leq m$. We want to show that $k=0$.
Given that $\EE$ consists of binomials, the reduction of a monomial modulo this set 
is also a monomial. Accordingly, let $d_i\bt^{\beta_i}$, with $d_i\not = 0$, 
denote the reduction of $c_i\bt_{J_i}t_e$ modulo $\EE$.  
Since $\EE \cup \{t_e^2 : e \in E_G\}$ is a Gr\"obner basis for $I(G)$, 
by our assumptions, none of $d_1\bt^{\beta_1},\dots,d_k\bt^{\beta_k}$
is divisible by a square, while all of $d_{k+1}\bt^{\beta_{k+1}},\dots,d_m\bt^{\beta_m}$ are.
Hence, the remainder of $c_1\bt_{J_1}t_e + \cdots + c_m\bt_{J_m}t_e$ modulo 
$\EE \cup \{t_e^2 : e \in E_G\}$ is $d_1\bt^{\beta_1}+\cdots+d_k\bt^{\beta_k}$.
From \eqref{eq: L531} it follows that  
\begin{equation}\label{eq: L568}
d_1\bt^{\beta_1}+\cdots+d_k\bt^{\beta_k} = 0.
\end{equation}
By \cite[Theorem 3.3]{Ne23},  $\EE \cup \{t_e^2-t^2_f : e,f \in E_G\}$ is a Gr\"obner basis for 
the Eulerian ideal, $I(X_G)\subseteq K[E_G]$, and hence, using \eqref{eq: L568}, we deduce that 
$$
(c_1\bt_{J_1} + \cdots + c_k\bt_{J_k})t_e \in I(X_G).
$$
Since $t_e$ is regular on $K[E_G]/I(X_G)$ 
(\emph{cf}.~\cite[Proposition~2.1]{NeVa22}), we get
$$
c_1\bt_{J_1} + \cdots + c_k\bt_{J_k} \in I(X_G)\subseteq I(G),
$$ 
which is a contradiction. We conclude that $k=0$. Varying $e\in E_G$, we 
deduce that $\bt_{J_i} + I(G)$ belongs to $\Soc \AG$, 
for every $i=1,\dots,r$.
\end{proof}

Let us now address the problem of giving a combinatorial characterization of 
reduced parity joins yielding monomial residues in the socle of $\AG$.
Fix an ordering of $E_G$. If $J$ is a reduced parity join and $\bt_J+I(G)$ belongs to the socle 
of $\AG$, then $\bt_J t_e \in I(G)$, for all $e\in E_G$. 
Using Corollary~\ref{cor: membership of square-free monomials}, 
what this means is that $J\cup \set{e}$ is no longer a parity join, for every 
$e\not \in J$. 
Hence a monomial socle basis for $\AG$ is in bijection with the set of reduced 
parity joins that are maximal, under inclusion, in the set of \emph{all}
parity joins. Note that, as Example~\ref{exa: maximal reduced parity join} shows, 
not all reduced parity joins that are maximal in the set of reduced parity joins 
are maximal in the set of all parity joins. 
\smallskip 

This combinatorial characterization of the socle depends on the choice of ordering of $E_G$.
Next, we shall give another combinatorial characterization 
in terms of the invariants $\tau(T,p)$ 
of the graph, which, in particular, is independent of an ordering of $E_G$. 
Before, we need to state an elementary result, concerning these numbers. 
Recall that we are regarding an edge of $G$ as a subset of $V_G$. 
Hence, if $e\in E_G$ and $T\subseteq V_G$ it makes sense to take the symmetric difference $T\sd e$. 

\begin{lemma}\label{lem: effect of adding edge}
Let $(T,p)\in \T_{\p,G}$ and $e\in E_G$. Then $(T\sd e,p+1)\in \T_{\p,G}$ and 
$$
\tau(T\sd e, p+1) = \tau(T,p)\pm 1.
$$
\end{lemma}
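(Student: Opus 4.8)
The plan is to reduce everything to Lemma~\ref{lem: symetric difference of Ti-joins} by observing that a single edge is itself a very simple join. Viewing $e=\set{i,j}$ as a two-element subset of $V_G$, the set $\set{e}$ satisfies $\deg_{\set{e}}(i)=\deg_{\set{e}}(j)=1$ and $\deg_{\set{e}}(v)=0$ for all other $v$, so $\set{e}$ is an $(e,1)$-join. Since $(T,p)\in\T_{\p,G}$, fix a $(T,p)$-join $J$. By Lemma~\ref{lem: symetric difference of Ti-joins}, the set $J\sd\set{e}$ is then a $(T\sd e,\,p+1)$-join. In particular $(T\sd e,p+1)\in\T_{\p,G}$, which settles the first assertion.

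For the second assertion, I would first bound $\tau(T\sd e,p+1)$ from above. Take $J$ to be a minimum $(T,p)$-join, so that $|J|=\tau(T,p)$. The join $J\sd\set{e}$ then has cardinality $|J|-1$ if $e\in J$ and $|J|+1$ if $e\notin J$; in either case $|J\sd\set{e}|\le\tau(T,p)+1$, whence $\tau(T\sd e,p+1)\le\tau(T,p)+1$. Since $(T\sd e)\sd e=T$ and $(p+1)+1=p$ in $\ZZ_2$, the operation $(T,p)\mapsto(T\sd e,p+1)$ is an involution, so applying the same argument to the pair $(T\sd e,p+1)$ yields the reverse inequality $\tau(T,p)\le\tau(T\sd e,p+1)+1$. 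Together these give $\bigl|\tau(T\sd e,p+1)-\tau(T,p)\bigr|\le 1$.

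Finally, to upgrade this to an equality with $\pm1$ rather than $0$, I would invoke parity. Every $(T,p)$-join has cardinality congruent to $p$ modulo $2$, so $\tau(T,p)\equiv p$, and likewise $\tau(T\sd e,p+1)\equiv p+1\pmod 2$. Hence the two numbers have opposite parity and cannot coincide; combined with the two-sided bound, their difference is forced to be exactly $\pm1$, as claimed. The argument is short, and the only point requiring attention is precisely this last step: the cardinality estimate by itself yields a difference of at most $1$, and it is the parity observation that rules out the value $0$ and pins the difference down to $\pm 1$.
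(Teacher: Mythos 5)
Your proof is correct and follows essentially the same route as the paper: both treat $\set{e}$ as an $(e,1)$-join, apply Lemma~\ref{lem: symetric difference of Ti-joins} to get $\tau(T\sd e,p+1)\leq \tau(T,p)+1$, and then repeat the argument with the roles of $(T,p)$ and $(T\sd e,p+1)$ exchanged to obtain the two-sided bound. The only difference is that you make explicit the parity observation ($\tau(T,p)\equiv p$ and $\tau(T\sd e,p+1)\equiv p+1$ modulo $2$) that rules out the difference being $0$, a step the paper's proof leaves implicit.
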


\begin{proof}

If $J\subseteq E_G$ is a $(T,p)$-join, then, by Lemma~\ref{lem: symetric difference of Ti-joins}, 
$J\sd \set{e}$ is a $(T\sd e,p+1)$-join. Hence, $\tau(T\sd e,p+1) \leq \tau(J,p) +1$.
Repeating this argument with $(T\sd e,p+1)$, we deduce that
$$
\tau(T,p)-1 \leq \tau(T\sd e, p+1) \leq \tau(J,p)+1
$$
so that $\tau(T\sd e, p+1) = \tau(T,p)\pm 1$.
\end{proof}

\begin{definition}
If $(T,p)\in \T_{\p,G}$ satisfies $\tau(T\sd e,p+1)=\tau(T,p)-1$, for every $e\in E_G$, then 
we will refer to $(T,p)$ as a \emph{socle pair} of $G$. To ease notation, if $G$ is bipartite, we omit the parity, 
and we will refer to $T\in \T_G$ as a \emph{socle set} if and only if $\tau(T\sd e) = \tau(T)-1$, for every 
$e\in E_G$.
\end{definition}

We are now ready to state our second main result. 

\begin{theorem}\label{thm: characterization of socle}
The socle degrees of $\AG$ are given by $\tau(T,p)$, for $(T,p)$ varying 
in the set of socle pairs. Alternatively, fixing an ordering of $E_G$, the socle degrees 
are the cardinalities of reduced parity joins $J\subseteq E_G$ such that 
$\bt_J t_e \in I(G)$, for all $e\in E_G$, or, equivalently, the cardinalities of reduced parity joins that 
are maximal in the set of all parity joins.
\end{theorem}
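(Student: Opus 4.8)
The plan is to fix an ordering of $E_G$ throughout and reduce everything to the monomial $K$-basis of $\AG$ furnished by Proposition~\ref{prop: K-basis in terms of reduced parity joins}. By Theorem~\ref{thm: Soc of J is generated by monomials}, $\Soc \AG$ is spanned by monomial residues, and since these residues lie in the basis $\{\bt_J + I(G) : J \text{ reduced parity join}\}$, a $K$-basis of $\Soc \AG$ is given exactly by those $\bt_J + I(G)$ that are annihilated by $\mm$, that is, by the reduced parity joins $J$ with $\bt_J t_e \in I(G)$ for every $e \in E_G$. Consequently the socle degrees are the cardinalities of such $J$; this already yields the second, ordering-dependent description in the statement, so the real work is to re-express the condition ``$\bt_J t_e \in I(G)$ for all $e$'' invariantly.

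For the invariant description I would first translate the membership condition into the numbers $\tau$. Write $(T,p) = \theta(J)$, so that $|J| = \tau(T,p)$ by Remark~\ref{rmk: parity joins as minimum cardinality T-joins}. If $e \in J$, then $t_e^2 \mid \bt_J t_e$ and membership is automatic; moreover $J \setminus \{e\}$ is a $(T \sd e, p+1)$-join of cardinality $\tau(T,p)-1$, so Lemma~\ref{lem: effect of adding edge} forces $\tau(T \sd e, p+1) = \tau(T,p)-1$ in this case. If $e \notin J$, then $\bt_J t_e = \bt_{J \cup \{e\}}$ and, by Corollary~\ref{cor: membership of square-free monomials}, this lies in $I(G)$ if and only if $J \cup \{e\}$ is not a parity join. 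Since $J \cup \{e\} = J \sd \{e\}$ is a $(T \sd e, p+1)$-join of cardinality $\tau(T,p)+1$ (Lemma~\ref{lem: symetric difference of Ti-joins}) and $\tau(T \sd e, p+1) = \tau(T,p)\pm 1$ (Lemma~\ref{lem: effect of adding edge}), the set $J \cup \{e\}$ is a parity join precisely when $\tau(T \sd e, p+1) = \tau(T,p)+1$, and fails to be one precisely when $\tau(T \sd e, p+1) = \tau(T,p)-1$ (Remark~\ref{rmk: parity joins as minimum cardinality T-joins}). Combining both cases, $\bt_J t_e \in I(G)$ for all $e \in E_G$ if and only if $\tau(T \sd e, p+1) = \tau(T,p)-1$ for all $e \in E_G$, i.e.\ if and only if $(T,p)$ is a socle pair. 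Since, by Proposition~\ref{prop: bijection reduced parity joins and pairs} and the remark following it, $J \mapsto \theta(J)$ is a bijection from reduced parity joins onto $\T_{\p,G}$ with $|J| = \tau(\theta(J))$, the reduced parity joins carrying socle elements correspond bijectively to socle pairs, and the socle degrees are exactly the values $\tau(T,p)$ with $(T,p)$ a socle pair, which is an ordering-free description.

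Finally I would identify these $J$ with the reduced parity joins that are maximal among \emph{all} parity joins. If $(T,p) = \theta(J)$ is not a socle pair, there is some $e$ with $\tau(T \sd e, p+1) = \tau(T,p)+1$; by the computation above $e \notin J$ and $J \cup \{e\}$ is a parity join strictly containing $J$, so $J$ is not maximal. For the converse I expect the main obstacle: non-extendability by a single edge must be upgraded to genuine maximality. Here I would argue that if $J' \supsetneq J$ were a parity join, then writing $J' \setminus J = \{e_1,\dots,e_m\}$ one has $\theta(J') = (T \sd e_1 \sd \cdots \sd e_m,\, p+m)$ with $\tau(\theta(J')) = |J'| = \tau(T,p)+m$; applying Lemma~\ref{lem: effect of adding edge} along the chain $J \subset J \cup \{e_1\} \subset \cdots \subset J'$, each of the $m$ steps changes $\tau$ by $\pm 1$ while the total change is $+m$, forcing every step to be $+1$ and in particular $\tau(T \sd e_1, p+1) = \tau(T,p)+1$, contradicting that $(T,p)$ is a socle pair. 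Thus a socle pair yields a reduced parity join maximal among all parity joins, which closes the equivalence of the three descriptions.
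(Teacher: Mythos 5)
Your proposal is correct and follows essentially the same route as the paper: extract a monomial socle basis from Theorem~\ref{thm: Soc of J is generated by monomials} and Proposition~\ref{prop: K-basis in terms of reduced parity joins}, then translate the condition $\bt_J t_e\in I(G)$ into $\tau(T\sd e,p+1)=\tau(T,p)-1$ via Corollary~\ref{cor: membership of square-free monomials}, Lemma~\ref{lem: symetric difference of Ti-joins}, Remark~\ref{rmk: parity joins as minimum cardinality T-joins} and Lemma~\ref{lem: effect of adding edge}. Your only genuine addition is the chain argument upgrading single-edge non-extendability to maximality among all parity joins, which the paper leaves implicit; it is valid, though it can be shortened by noting that any subset of a parity join is again a parity join, so one may pass directly from a larger parity join $J'\supsetneq J$ to the parity join $J\cup\set{e}$ for any $e\in J'\setminus J$.
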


\begin{proof}
Let us fix an ordering of $E_G$. Let $\set{J_1,\dots,J_t}$ be the set of reduced parity joins $J\subseteq E_G$ 
such that $\bt_J t_e \in I(G)$, for every $e\in E_G$.  By Proposition~\ref{prop: K-basis in terms of reduced parity joins} 
and Theorem~\ref{thm: Soc of J is generated by monomials}, 
$\set{\bt_{J_1}+I(G),\dots,\bt_{J_t}+I(G)}$ is a $K$-basis for the socle of $\AG$ and hence
the socle degrees are given by the cardinalities of $J_1,\dots,J_t$. 
Now, let $J$ be \emph{any} reduced parity join and denote $\theta(J)=(T,p)$.
Since $\tau(T,p)=|J|$ (\emph{cf}.~Remark~\ref{rmk: parity joins as minimum cardinality T-joins}), using 
Lemma~\ref{lem: effect of adding edge}, to prove the first assertion, 
it suffices to show that for any $e \in E_G$, 
\begin{equation}\label{eq: L548}
\bt_J t_e \in I(G)\iff \tau(T\sd e,p+1)< |J|+1.
\end{equation}
If $e\in J$ then $\bt_J t_e$ is divisible by a square, hence belongs to $I(G)$, and, 
on the other hand, $J\sd \{e\}$ is a $(T\sd e,p+1)$-join of cardinality $|J|-1$.
If $e\not \in J$ then, using Corollary~\ref{cor: membership of square-free monomials} and 
Lemma~\ref{lem: symetric difference of Ti-joins}, 
$\bt_Jt_e \in I(G)$ if and only if there exists a even cardinality Eulerian subset $C\subseteq E_G$
such that 
$$
\ts |C\sd(J\cup \set{e})|<|J\cup \set{e}| = |J|+1,
$$ 
which is equivalent to $\tau(T\sd e,p+1)<|J|+1$.
\end{proof}

\begin{cor}
If $E_G$ possesses a single non-empty, even cardinality Eulerian subset, $C\subseteq E_G$, 
then $\AG$ is level of socle degree $s-c$, and type $\binom{2c-1}{c-1}$, 
where $s=|E_G|$ and $c=\nicefrac{|C|}{2}$. 
\end{cor}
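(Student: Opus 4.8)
The plan is to apply Theorem~\ref{thm: characterization of socle}, according to which the type and socle degrees of $\AG$ are read off from the reduced parity joins that are maximal, under inclusion, in the set of all parity joins. The first step is to describe the parity joins explicitly under the present hypothesis. Since $C$ is the only non-empty, even cardinality Eulerian subset, Definition~\ref{def: parity join} collapses to a single inequality: a subset $J\subseteq E_G$ is a parity join if and only if $|J\cap C|\leq c$. This is the only nontrivial condition to check, and it renders the whole poset of parity joins transparent.

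Next I would identify the maximal parity joins. If $|J\cap C|<c$, then, since $|C|=2c>c$, there is an edge of $C$ not in $J$, and adjoining it keeps the intersection with $C$ of size at most $c$; hence such a $J$ is not maximal. If $|J\cap C|=c$ but $J\not\supseteq E_G\setminus C$, then adjoining any edge of $(E_G\setminus C)\setminus J$ leaves $|J\cap C|=c$ unchanged, so again $J$ is not maximal. Therefore the maximal parity joins are exactly the sets $J=(E_G\setminus C)\cup S$ with $S\subseteq C$ and $|S|=c$. Each of these has cardinality $(s-2c)+c=s-c$, independent of $S$; by Theorem~\ref{thm: characterization of socle} this already shows that $\AG$ is level with socle degree $s-c$, matching the socle degree computed in the preceding Proposition.

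Finally I would count the reduced ones among these to obtain the type. For a maximal parity join $J=(E_G\setminus C)\cup S$ we have $|J\cap C|=|S|=c=|C|/2$, so in Definition~\ref{def: reduced parity join} the equality case is triggered precisely by $C'=C$ (again the only non-empty even Eulerian subset), and $J$ is reduced if and only if it contains the last edge of $C$, i.e.\ that edge lies in $S$. The reduced maximal parity joins thus correspond to the $c$-subsets $S$ of the $2c$-element set $C$ containing a fixed edge, of which there are $\binom{2c-1}{c-1}$. This count is the dimension of $\Soc\AG$, giving type $\binom{2c-1}{c-1}$.

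I do not anticipate a genuine obstacle: every step is forced once the parity joins are described, and the only point requiring care is to invoke the correct maximality notion from Theorem~\ref{thm: characterization of socle}, namely maximality in the set of \emph{all} parity joins rather than merely among reduced parity joins. As a consistency check, one may instead evaluate the $h$-vector formula of the preceding Proposition at $i=s-c$: the terms with $k\geq 1$ vanish because $\binom{s-2c}{s-2c+k}=0$, leaving $h_{s-c}=\binom{2c-1}{c-1}$, which equals $\dim_K\AG_{s-c}$ and hence, as $s-c$ is the top degree, equals the socle dimension.
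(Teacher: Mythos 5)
Your argument is correct and follows essentially the same route as the paper: you show that the reduced parity joins that are maximal among all parity joins are exactly the sets $(E_G\setminus C)\cup S$ with $S\subseteq C$, $|S|=c$ and $S$ containing the last edge of $C$, then read off the socle degree $s-c$ and type $\binom{2c-1}{c-1}$ from Theorem~\ref{thm: characterization of socle}. Your version merely spells out the maximality analysis and the $h$-vector consistency check in more detail than the paper does.
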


\begin{proof}
If $J\subseteq E_G$ is a reduced parity join which is maximal in the set of parity joins then 
$J\cap C = \frac{|C|}{2}$, for otherwise the last edge of $C\setminus J$ could be added to $J$
to form another reduced parity join. Similarly, $J$ must include all edges of $G\setminus C$.
Hence the cardinality of any such reduced parity join is $c+(s-2c)=s-c$, hence 
$\AG$ is level, and their number is $\binom{2c-1}{c-1}$.
\end{proof}

Let us finish this section by giving an application Theorem~\ref{thm: characterization of socle}.

\begin{theorem}\label{thm: Gorenstein for bipartite}
If $G$ is bipartite, then $\AG$ is Gorenstein if and only if 
$E_G$ does not possess any non-empty even cardinality Eulerian subset. 
If this is the case, then 
$I(G)=(t_e^2 : e \in E_G)$ 
and therefore $\AG$ is a complete intersection. 
\end{theorem}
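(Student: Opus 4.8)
The plan is to prove both directions of the equivalence and then identify the structure of $\AG$ in the Gorenstein case. The backward direction is the easy one: if $E_G$ has no non-empty even cardinality Eulerian subset, then $\EE$ is empty, so $I(G)=(t_e^2 : e\in E_G)$, which is a complete intersection, hence Gorenstein. This also establishes the final sentence of the statement, so the real content is the forward direction, which is where I would use bipartiteness together with Theorem~\ref{thm: characterization of socle}.

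For the forward direction I would argue by contraposition: assuming $E_G$ possesses a non-empty even cardinality Eulerian subset, I want to exhibit at least two distinct socle sets, which by Theorem~\ref{thm: characterization of socle} forces the type (the $K$-dimension of the socle) to be at least $2$, so $\AG$ is not Gorenstein. Since $G$ is bipartite, $\T_{\p,G}$ contains exactly one pair for each $T\in\T_G$, and I may work with socle \emph{sets} $T\in\T_G$ satisfying $\tau(T\sd e)=\tau(T)-1$ for every $e\in E_G$, as in the definition. The first natural socle set to produce is the one coming from a reduced parity join of maximum cardinality: the maximum socle degree equals the maximum cardinality of a parity join, and such a maximal parity join $J$ cannot be enlarged, so its image $\theta(J)$ is a socle pair. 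The crux is to produce a \emph{second}, distinct socle set.

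The main obstacle, and the heart of the proof, is locating this second socle set. Here is where the existence of a non-empty even cardinality Eulerian subset $C$ enters decisively. Because $G$ is bipartite, every Eulerian subset has even cardinality, and bipartiteness means that for each $T\in\T_G$ the parity of every $T$-join is fixed; this rigidity is exactly what is unavailable in the non-bipartite case and is what I expect to exploit. I would consider the socle set already found, say $T_0=\theta(J)$ with $J$ a maximal parity join, and examine the effect of the Eulerian subset $C$: since $C\neq\emptyset$ is Eulerian, modifying $J$ along $C$ should produce a genuinely different $T$-join whose associated pair is again a socle pair, or else directly contradict maximality. Concretely, I expect to show that $\emptyset\in\T_G$ with its two behaviours, or a suitable translate $T_0\sd C'$ for a sub-collection $C'$ of edges of $C$, yields a socle set distinct from $T_0$. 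Verifying that this candidate is a socle set requires checking $\tau(T\sd e)=\tau(T)-1$ for all $e$, using Lemma~\ref{lem: effect of adding edge} to know that $\tau$ changes by exactly $\pm1$ and ruling out the $+1$ case via the maximality and the parity constraints from bipartiteness.

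Once two distinct socle pairs are in hand, Theorem~\ref{thm: characterization of socle} gives two linearly independent monomial residues in $\Soc\AG$, so the type is at least $2$ and $\AG$ fails to be Gorenstein, completing the contrapositive. I would then close by restating that the only remaining case is $\EE=\emptyset$, in which $I(G)=(t_e^2 : e\in E_G)$ is a complete intersection, as recorded already in the discussion following Definition~\ref{def: saga of G}. The delicate point to get right is the parity bookkeeping: I must be careful that the second socle set is distinct from the first as an element of $\T_G$ (not merely as a pair), and that the chosen Eulerian subset $C$ is used to guarantee distinctness rather than accidentally reproducing $T_0$.
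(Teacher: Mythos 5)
Your backward direction and the identification of $\AG$ as a complete intersection when $\EE=\emptyset$ match the paper exactly, and your observation that a parity join of maximum cardinality yields a socle set is correct (for $e\in J$ one uses $J\sd\set{e}$, and for $e\notin J$ the maximality of $|J|$ rules out $\tau(\theta(J)\sd e)=\tau(\theta(J))+1$). The gap is in what you yourself call the heart of the proof: you never actually produce a second socle set distinct from the first, you only announce that you ``expect to show'' that some translate $T_0\sd C'$ works. Worse, the most natural candidate fails: since $C$ is Eulerian, $J\sd C$ is a $T$-join for the \emph{same} $T=\theta(J)$ (and, $G$ being bipartite, of the same parity), so ``modifying $J$ along $C$'' reproduces $T_0$ rather than giving a new element of $\T_G$. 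Your alternative suggestion, ``$\emptyset\in\T_G$ with its two behaviours,'' is also unavailable here: in a bipartite graph every $\emptyset$-join has even cardinality, so $\emptyset$ contributes only the single pair $(\emptyset,0)$, whose socle-set candidacy is exactly what is in question.

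For comparison, the paper's proof of this direction is long and genuinely delicate. It takes $C$ to be a \emph{minimum} even Eulerian subset (hence a shortest even cycle, of length $2c\geq 4$), splits it into two half-paths $J_1,J_2$ sharing the last edge, obtains two reduced parity joins with $\tau(T_1)=\tau(T_2)=c$ where $T_1=\set{v,w}$, $T_2=\set{v',w'}$, and then \emph{iterates} the non-socle condition, adding edges $e_1,\dots,e_k$ and $f_1,\dots,f_r$ outside $C$ until two socle sets are reached. If $k\neq r$ one gets socle elements in different degrees; if $k=r$ one must rule out the two socle sets coinciding, which forces $\set{v,v',w,w'}=e_1\sd\cdots\sd e_k\sd f_1\sd\cdots\sd f_k$ and leads to a three-case analysis on the pair of paths realizing this symmetric difference, each case contradicted by parity-join inequalities applied to carefully chosen Eulerian subsets. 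None of this machinery --- the choice of a minimal cycle, the two half-paths, the iteration, or the final case analysis guaranteeing distinctness --- is present or replaceable by the sketch you give, so the forward direction remains unproved.
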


\begin{proof}
If $E_G$ does not contain any non-empty, even cardinality Eulerian subset then 
$\EE=\emptyset$ and, accordingly, $I(G)=(t_e^2 : e \in E_G)$. We deduce that $\AG$
is then a complete intersection and therefore that 
$\AG$ is Gorenstein (\emph{cf}.~\cite[Corollary~A.6.4]{HeHi11}).
Let us assume that there exists a non-empty, even cardinality Eulerian subset of $E_G$.
Choose $C\subseteq E_G$, one such set of minimum cardinality, and let us show that the socle of $\AG$ is not $1$-dimensional.  
Since $G$ is bipartite, $C$ is the set of edges of a smallest 
length even cycle. Let us denote $2c=|C|\geq 4$ and let us fix an ordering of $E_G$ such that the last edge of $G$ belongs to $C$. 
Let $v$ and $v'$ be the vertices of the last edge. 
Let $J_1$ denote the set of edges of the path, contained in $C$, 
that starts at $v$, goes through $v'$ and has length $c$. 
Let $w$ denote the second end-point of this path.
Similarly, let $J_2$ denote the set of edges of the path, 
contained in $C$, that starts at $v'$, goes 
through $v$ and has length $c$, and let us denote by $w'$ the second 
end-point of this path.  
Given that $c$ is the minimum cardinality of an Eulerian subset,
any subset of $c$ elements of $C$ that includes the last edge
is a reduced parity join in $G$. Hence both $J_1$ and $J_2$ are
reduced parity joins. In particular, 
if $T_1=\{v,w\}$ and $T_2=\{v',w'\}$, 
then $\tau(T_1)=\tau(T_2)=c$ (\emph{cf}.~Remark~\ref{rmk: parity joins as minimum cardinality T-joins},
note also that we are omitting the parity in the notation, as
$G$ is bipartite). If $T_1$ is not a socle set then 
there exists $e\in E_G$ such that $\tau(T_1\sd e)=c+1$. Clearly,
$e$ cannot belong to $C$. The same applies to $T_2$.
Iterating this procedure, let $\{e_1,\dots,e_k\}$ and $\{f_1,\dots,f_r\}$ 
be sets of edges (possibly empty) of cardinality $k$ and 
$r$, respectively, such that both $\{e_1,\dots,e_k\}$ and $\{f_1,\dots,f_r\}$ have empty intersection with $C$, both 
$T_1\sd e_1\sd \dots \sd e_k$ and $T_2\sd f_1 \sd \dots \sd f_r$
are socle sets and such that 
$$
\tau(T_1\sd e_1\sd \dots \sd e_k) = c+k,\quad 
\tau(T_2\sd f_1 \sd \dots \sd f_r) = c+r.
$$
Note that since $J_1\cup \set{e_1,\dots,e_k}$ is a $(T_1\sd e_1 \sd \cdots \sd e_k)$-join of cardinality 
$$
c+k=\tau(T_1\sd e_1 \sd \cdots \sd e_k),
$$
we deduce that $J_1\cup \set{e_1,\dots,e_k}$
is a parity join (\emph{cf}.~Remark~\ref{rmk: parity joins as minimum cardinality T-joins}). 
The same applies to $J_2\cup \set{f_1,\dots,f_r}$, however either of them may not be reduced. 
If $k\not =r$ then these socle sets yield two socle monomials 
of different degrees, hence $\AG$ is not level and therefore the socle 
of $\AG$ is not $1$-dimensional. If $k=r$ and the socle 
of $\AG$ is $1$-dimensional then we must have 
\begin{equation}\label{eq: L643}
\renewcommand{\arraystretch}{1.3}
\begin{array}{c}
T_1\sd e_1 \sd \cdots \sd e_k = T_2\sd f_1\sd \cdots \sd f_k\\
\iff T_1\sd T_2 = \{v,v',w,w' \} = e_1 \sd \cdots \sd e_k\sd f_1\sd \cdots \sd f_k.
\end{array}
\end{equation}
With a view to a contradiction, assume this holds and,
from the equality in \eqref{eq: L643}, let us draw a contradiction. 
Without loss of generality, we may assume
$$\{e_1,\dots,e_k\}\cap \{f_1,\dots,f_k\} = \emptyset.$$ 
If 
$\{e_1,\dots,e_k\}\cup \{f_1,\dots,f_k\}$
contains the edge set of any cycle, then, because $G$ is bipartite, 
such cycle must be even. Let $2m$ be its length. 
Since, as we argued above, \mbox{$J_1\cup \set{e_1,\dots,e_k}$} 
and $J_2\cup \set{f_1,\dots,f_r}$
are parity joins, such cycle must consist of $m$ edges in 
$\set{e_1,\dots,e_k}$, say $e_{i_1},\dots,e_{i_m}$, and  $m$ edges in $\set{f_1,\dots,f_k}$, say 
$f_{j_1}, \dots, f_{j_m}$, in which event, 
$$
e_{i_1}\sd \cdots \sd e_{i_m} = f_{j_1}\sd \cdots \sd f_{j_m}.
$$
Thus, we may assume that $\{e_1,\dots,e_k\}\cup \{f_1,\dots,f_k\}$ 
does not contain the edge set of any cycle. Then \eqref{eq: L643} implies that 
$$
\{e_1,\dots,e_k\}\cup \{f_1,\dots,f_k\} = P\sqcup Q,
$$
where $P\sqcup Q$ denotes the disjoint union of the edge sets of two paths, 
which have empty intersection, and whose end-points form the set $\{v,w,v',w'\}$.
Let us denote 
$$
P_e = P\cap \{e_1,\dots,e_k\}, \quad P_f = P\cap \{f_1,\dots,f_k\}
$$
and, similarly, let us use the notation $Q_e$ and $Q_f$ for the corresponding subsets of $Q$. 
Without loss of generality, 
we may assume that $v$ in an end-point of $P$. 
There are three cases to consider. 
\smallskip 

\noindent
First, let us consider the case when the end-points of $P$ are $v$ and $w$. 
Consider the Eulerian subset given by $J_1\sd P$. Then, because 
$P\cap C = \emptyset$ and $J_1\subseteq C$, it follows that 
$J_1\sd P = J_1\sqcup P$ and hence $|J_1\sd P|=c+|P|>0$. Also, as $G$ is bipartite, we deduce that $c+|P|$ is even.
Since both $J_1\cup P_e\subseteq J_1\cup \set{e_1,\dots,e_k}$ and $J_2\cup P_f\subseteq J_2\cup \set{f_1,\dots,f_k}$
are parity joins. Using the condition of the definition (\emph{cf}.~Definition~\ref{def: parity join}) with 
$C=J_1\sd P$ we get
$$
\ts c+|P_e| \leq \frac{c+|P|}{2} \quad\text{and} \quad 1+|P_f|\leq \frac{c+|P|}{2}
$$
which implies $\ts c+1+|P_e|+|P_f| \leq c+|P| \iff 1\leq 0$, a contradiction. 
\medskip 

\noindent
Second, let the end-points of $P$ be $v$ and $w'$. 
Arguing as before, we deduce that  
$J_1\sd\{\{w,w'\}\} \sd P$ is an Eulerian subset of 
even cardinality, equal to $c+1+|P|$.
Since 
$J_1\cup P_e$ is a parity join, we get:
$$
\ts c+|P_e|\leq \frac{c+1+|P|}{2}. 
$$
Similarly, since $J_2\sd\{\{v,v' \}\}\sd P$, which has cardinality $c-1+|P|$, is also an Eulerian subset of even cardinality 
and $J_2\cup P_f$ is a parity join, we get 
$$
\ts c-1+|P_f|\leq \frac{c-1+|P|}{2}. 
$$
Adding the two inequalities above, we conclude that $c\leq 1$, which is a contradiction.
\smallskip

\noindent
Third and last, assume that the end-points of $P$ be $v$ and $v'$, so that 
$Q$ has end-points $w$ and $w'$. Consider the non-empty, even cardinality 
Eulerian subset given by $C\sd\{\{w,w'\}\}\sd Q$. Using the pa\-ri\-ty joins 
$J_1\cup Q_e$ and $J_2\cup Q_f$, in the same way as before, we deduce that 
$2c+|Q|\leq 2c-1+|Q|\iff 0\leq -1$.
\end{proof}

\begin{cor}
If $G$ is bipartite, then $K[E_G]/I(X_G)$, the quotient of $K[E_G]$ 
by the Eulerian ideal, is Gorenstein if and only if $E_G$ does not possess any non-empty 
even cardinality subset.
\end{cor}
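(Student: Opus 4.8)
The plan is to deduce the statement from Theorem~\ref{thm: Gorenstein for bipartite} by relating the graded ring $S:=K[E_G]/I(X_G)$ to its Artinian quotient $\AG$ through a homogeneous nonzerodivisor, and then invoking the standard principle that, for Cohen--Macaulay graded rings, the Gorenstein property is both preserved and reflected upon quotienting by a regular element.

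First I would assemble the structural facts about $S$. By \cite[Proposition~2.1]{NeVa22}, every $t_e$ is regular on $S$; consequently $t_e^2$ is a homogeneous nonzerodivisor of positive degree. Moreover, from the identity $I(G)=I(X_G)+(t_e^2)$ recorded in Subsection~\ref{subsec: Eulerian ideal and Artinian algebra}, we have $\AG=S/(t_e^2)$. Since $\AG$ is Artinian, so that $\dim \AG=0$, and since $t_e^2$ is a nonzerodivisor contained in the irrelevant ideal, it follows that $\dim S=1$.

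Next I would note that $S$ is Cohen--Macaulay: the element $t_e$ is a homogeneous nonzerodivisor lying in the irrelevant ideal of $S$, whence $\operatorname{depth} S\geq 1=\dim S$, so $S$ is a one-dimensional Cohen--Macaulay graded ring. With $S$ Cohen--Macaulay and $t_e^2$ a homogeneous nonzerodivisor, the reduction principle for the Gorenstein property (the Cohen--Macaulay type is unchanged upon passing to the quotient by a regular element, \emph{cf}.~\cite{HeHi11}) yields that $S$ is Gorenstein if and only if $S/(t_e^2)=\AG$ is Gorenstein.

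It then remains to apply Theorem~\ref{thm: Gorenstein for bipartite}, which, for bipartite $G$, characterizes the Gorenstein property of $\AG$ by the absence of a non-empty even cardinality Eulerian subset of $E_G$. Combining the last two equivalences gives that $K[E_G]/I(X_G)$ is Gorenstein precisely when $E_G$ admits no such subset, as claimed. The only delicate point is the application of the reduction principle: I would make sure that $t_e^2$, although of degree $2$ rather than $1$, is a genuine homogeneous nonzerodivisor, so that the equality of Cohen--Macaulay types of $S$ and $\AG$ is legitimate and one need not use $t_e$ itself; once the Cohen--Macaulayness of $S$ is in place, the remainder is routine.
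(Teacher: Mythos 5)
Your proof is correct and follows essentially the same route as the paper: quotient $K[E_G]/I(X_G)$ by the regular element $t_e^2$ to obtain $\AG$, use that Gorensteinness passes back and forth across a regular element on a Cohen--Macaulay ring, and conclude via Theorem~\ref{thm: Gorenstein for bipartite}. The only cosmetic difference is that you derive Cohen--Macaulayness of $K[E_G]/I(X_G)$ directly from $\dim = \operatorname{depth} = 1$, whereas the paper cites it from \cite{NeVPVi20}; both are fine.
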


\begin{proof}
Choose $e\in E_G$. Since, by \cite[Proposition~2.1]{NeVPVi20}, $K[E_G]/I(X_G)$ is Cohen--Macaulay
and $t_e^2$ is regular in $K[E_G]/I(X_G)$ and since $\AG= K[E_G]/(I(X_G),t_e^2)$, we deduce that $\AG$
is Gorenstein if and only if $K[E_G]/I(X_G)$ is (\emph{cf}.~\cite[Proposition~A.6.2]{HeHi11}).
\end{proof}


\section{The case $G=\K_n$}\label{sec: complete}

Below, $\K_n$ denotes a complete graph on 
$n$ vertices. The aim of this section is to compute 
the $h$-vector $(h_0,h_1,\dots,h_r)$ and the socle degrees of $\AG$, when $G=\K_n$. We begin 
by computing all possible values of $\tau(T,p)$, where $T$ is an even 
cardinality set of vertices and $p\in \ZZ_2$.

\begin{lemma}\label{lem: tau for complete}
If $G=\K_n$, with $n\geq 3$, and $\emptyset \not = T\subseteq V_G$ has even cardinality, then 
$$
\ts \{\tau(T,0),\tau(T,1)\}=\{\frac{|T|}{2},\frac{|T|}{2}+1\}.
$$
Additionally, $\tau(\emptyset,0)=0$ and $\tau(\emptyset,1)=3$.
\end{lemma}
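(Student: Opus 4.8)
The plan is to combine one lower bound valid for every $T$-join with two explicit constructions realizing the two parities. First I would record the degree-sum lower bound: if $J$ is any $T$-join, then $2|J| = \sum_{v\in V_G}\deg_J(v) \geq \sum_{v\in T}\deg_J(v)\geq |T|$, since every vertex of $T$ has odd, hence positive, degree in $J$. Thus every $T$-join has at least $|T|/2$ edges, so $\tau(T,p)\geq |T|/2$ for both $p\in\ZZ_2$. Writing $p_0$ for the parity of $|T|/2$ and $p_1=p_0+1$ for the other one, the integers of parity $p_1$ that are $\geq |T|/2$ are all $\geq |T|/2+1$, so automatically $\tau(T,p_1)\geq |T|/2+1$.

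Next I would match these bounds from above. Since $G=\K_n$ is complete, I would pair the $|T|$ vertices of $T$ into $|T|/2$ pairs and join each pair by its edge; this produces a $T$-join $J_0$ with $\deg_{J_0}(v)=1$ for $v\in T$ and $0$ otherwise, of cardinality exactly $|T|/2$, whence $\tau(T,p_0)=|T|/2$. To realize the opposite parity I would fix an edge $\set{a,b}\in J_0$ and, using $n\geq 3$, a third vertex $c$, and then form the symmetric difference of $J_0$ with the triangle $\Delta=\set{\set{a,b},\set{b,c},\set{c,a}}$. As $\Delta$ is an Eulerian subset of odd cardinality, it is an $(\emptyset,1)$-join, so by Lemma~\ref{lem: symetric difference of Ti-joins} the set $J_0\sd\Delta$ is a $(T,p_1)$-join. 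The crucial point is that $J_0$ is a matching, so $a$ and $b$ are matched only to each other; hence $\set{b,c},\set{c,a}\notin J_0$ and $J_0\cap\Delta=\set{\set{a,b}}$. Consequently $|J_0\sd\Delta| = |T|/2 + 3 - 2 = |T|/2 + 1$, giving $\tau(T,p_1)\leq |T|/2+1$ and hence equality. Together with the lower bound this yields $\set{\tau(T,0),\tau(T,1)}=\set{\tfrac{|T|}{2},\tfrac{|T|}{2}+1}$.

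For the exceptional case $T=\emptyset$, the empty set is an $(\emptyset,0)$-join of cardinality $0$, so $\tau(\emptyset,0)=0$. A $(\emptyset,1)$-join is an Eulerian subset of odd cardinality; decomposing it into edge-disjoint cycles, odd total cardinality forces at least one odd cycle, whose length is at least $3$ (equivalently, a direct check shows no Eulerian subset has cardinality $1$ or $2$). A single triangle, which exists because $n\geq 3$, realizes cardinality $3$, so $\tau(\emptyset,1)=3$.

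I expect the main obstacle to be the upper bound for the off-parity value $\tau(T,p_1)$: one must flip the parity of a minimum $T$-join while adding as few edges as possible. The triangle trick is what makes this work, and the essential verification is that the triangle meets the matching $J_0$ in exactly one edge, so that the symmetric difference deletes one edge and inserts two, increasing the cardinality by precisely one rather than by three.
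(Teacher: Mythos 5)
Your proof is correct and follows essentially the same route as the paper: realize $\tau(T,p_0)=|T|/2$ by a perfect matching on $T$, and flip the parity by trading one matching edge $\set{a,b}$ for the two edges through a third vertex (your symmetric difference with a triangle is exactly the paper's replacement of $\set{v,w}$ by $\set{u,v},\set{u,w}$). Your version is marginally cleaner in that the degree-sum lower bound is made explicit and the cases $|T|=2$ and $|T|\geq 4$ are handled uniformly, but the underlying argument is the same.
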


\begin{proof}
If $T=\emptyset$ then, clearly, $\tau(\emptyset,0)=0$. Additionally, 
any set of three edges forming a triangle is an $\emptyset$-join of odd cardinality. Since 
there are no $(\emptyset,1)$-joins of smaller cardinality, 
$\tau(\emptyset,1)=3$. If $T=\{v,w\}$ with $v\not = w \in V_G$, then the edge $\set{v,w}$
is a minimum cardinality $(T,1)$-join and, taking any vertex $u\in V_G\setminus\set{v,w}$
the two edges $\{v,u\}$ and $\{w,u\}$ form a $(T,0)$-join of minimum cardinality. 
Hence $\tau(T,0)=\nicefrac{|T|}{2}+1 = 2$ and $\tau(T,1)=\nicefrac{|T|}{2}=1$. 
\smallskip

Let us now assume that $|T|\geq 4$ and let us take $J\subseteq E_G$, any matching of the vertices in $T$.
Then $|J|$ is equal to $\nicefrac{|T|}{2}$ and $J$ is a $T$-join of minimum cardinality.
Without loss of generality, assume that $|J|$ is even, 
so that $\tau(T,0)=\nicefrac{|T|}{2}$. Since $J$ has minimum cardinality among all $T$-joins, any $(T,1)$-join 
has cardinality at least $\nicefrac{|T|}{2}+1$. Let us show that one such $(T,1)$-join exists.
Take $\{v,w\} \in J$ and $u\in T\setminus \set{v,w}$. Then 
$$
(J\setminus \set{\set{v,w}})\cup \set{\set{u,v},\set{u,w}}
$$
is a $(T,1)$-join of cardinality $\nicefrac{|T|}{2}+1$.
\end{proof}

\begin{theorem}\label{thm: socle degrees for complete}
Let $G=\K_n$, with $n\geq 4$.
\begin{enumerate}\renewcommand{\itemsep}{.2cm}
\item $h_1=\binom{n}{2}$, $h_3 = 1+\binom{n}{4}+\binom{n}{6}$
and, if $i\not = 1,3$, $h_i=\binom{n}{2i-2}+\binom{n}{2i}$.
\item If $n$ is odd, the socle pairs are
$(\emptyset,1)$ together with $(T,\frac{n+1}{2}+2\ZZ)$, where 
$T\subseteq V_G$ is any subset of cardinality $n-1$. 
In particular, $\AG$ is of type $n+1$ and the sequence of socle degrees 
is $3$, $\frac{n+1}{2},\dots,\frac{n+1}{2}$.
\item If $n$ is even, the socle pairs are 
$\{(\emptyset,1), (V_G,0), (V_G,1)\}$. 
In particular, $\AG$ is of type $3$ and the sequence of socle degrees is $3, \frac{n}{2}, \frac{n}{2}+1$. 
\end{enumerate}

\end{theorem}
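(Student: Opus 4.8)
My plan is to read off the $h$-vector of part~(i) from Corollary~\ref{cor: Hilbert series and reduced parity joins}, counting pairs $(T,p)$ by the value of $\tau(T,p)$, and to determine the socle pairs of parts~(ii)--(iii) by testing the socle-pair condition of Theorem~\ref{thm: characterization of socle} against the $\tau$-values of Lemma~\ref{lem: tau for complete}. Throughout I use that $\K_n$ is connected and, for $n\geq 3$, non-bipartite, so that $\T_{\p,G}=\T_G\times\ZZ_2$ with $\T_G$ the set of even cardinality subsets of $V_G$. For part~(i), $h_i$ is the number of pairs with $\tau(T,p)=i$; by Lemma~\ref{lem: tau for complete} every nonempty even $T$ contributes the two values $|T|/2$ and $|T|/2+1$, whereas $T=\emptyset$ contributes $0$ and $3$ in place of the expected $0$ and $1$. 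Hence the pairs with $\tau=i$ arise from the sets of size $2i$ (through $|T|/2$) and of size $2i-2$ (through $|T|/2+1$), which naively gives $\binom{n}{2i-2}+\binom{n}{2i}$; correcting for the single anomaly that $(\emptyset,1)$ realises $3$ rather than $1$ deletes one pair from $i=1$ and adds one to $i=3$, producing $h_1=\binom{n}{2}$, $h_3=1+\binom{n}{4}+\binom{n}{6}$ and $h_i=\binom{n}{2i-2}+\binom{n}{2i}$ otherwise.

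For parts~(ii) and~(iii) I first sharpen Lemma~\ref{lem: tau for complete} to record \emph{which} parity attains the smaller value: a perfect matching of $T$ inside $\K_n$ is a minimum $T$-join of cardinality $|T|/2$, so $\tau(T,\tfrac{|T|}{2}+2\ZZ)=\tfrac{|T|}{2}$ and $\tau(T,\tfrac{|T|}{2}+1+2\ZZ)=\tfrac{|T|}{2}+1$ for every nonempty even $T$. I then classify an edge $e$ by the number of its endpoints in $T$, so that $|T\sd e|$ equals $|T|-2$, $|T|$ or $|T|+2$ according as this number is $2$, $1$ or $0$. Since $(T,p)$ is a socle pair exactly when $\tau(T\sd e,p+1)=\tau(T,p)-1$ for every $e\in E_G$, the decisive reduction is this: if $|T|\leq n-2$ then some edge has no endpoint in $T$, and for such an edge $\tau(T\sd e,p+1)\geq |T|/2+1>\tau(T,p)-1$ whichever value $\tau(T,p)$ takes; thus $(T,p)$ fails to be a socle pair. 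Consequently every socle pair with $T\neq\emptyset$ satisfies $|T|\geq n-1$.

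It then remains to run the two resulting cases and to settle $T=\emptyset$. When $n$ is even, $|T|\geq n-1$ forces $T=V_G$; every edge has both endpoints in $T$, and the parity refinement gives $\tau(V_G\setminus\{i,j\},p+1)=\tau(V_G,p)-1$ for \emph{both} parities, so $(V_G,0)$ and $(V_G,1)$ are socle pairs, with $\tau$-values $n/2$ and $n/2+1$. When $n$ is odd, $|T|\geq n-1$ forces $|T|=n-1$; an edge with exactly one endpoint in $T$ yields $T\sd e=V_G\setminus\{j\}$, again of size $n-1$, which excludes the parity with $\tau(T,p)=|T|/2$, leaving only the parity $p=\tfrac{n+1}{2}+2\ZZ$, for which $\tau(T,p)=\tfrac{n-1}{2}+1=\tfrac{n+1}{2}$; this one I would confirm to be a socle pair by checking the remaining two edge types (two, resp. one, endpoints in $T$) against the refinement. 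Finally $(\emptyset,0)$ is excluded since $\tau(\emptyset,0)=0$ cannot decrease, while $(\emptyset,1)$ is a socle pair because $\tau(\{i,j\},0)=2=\tau(\emptyset,1)-1$. Collecting these gives the three socle pairs with degrees $3,n/2,n/2+1$ of part~(iii), and the $n+1$ socle pairs — the $n$ subsets of size $n-1$ together with $(\emptyset,1)$ — with degrees $3,\tfrac{n+1}{2},\dots,\tfrac{n+1}{2}$ of part~(ii).

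The binomial bookkeeping in part~(i) is routine. The main obstacle is the parity accounting in parts~(ii)--(iii): Lemma~\ref{lem: tau for complete} fixes only the unordered pair $\{\tau(T,0),\tau(T,1)\}$, so the argument hinges on the matching observation that determines which parity realises $|T|/2$ and on propagating that parity correctly through each symmetric difference $T\sd e$. The most delicate step is the odd case, where I must simultaneously rule out the $\tau=|T|/2$ parity using single-endpoint edges and verify that the $\tau=|T|/2+1$ parity meets the socle-pair condition for all admissible edge types at once.
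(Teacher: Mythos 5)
Your proposal is correct and follows essentially the same route as the paper: part (i) by counting pairs $(T,p)$ according to the value of $\tau(T,p)$ via Lemma~\ref{lem: tau for complete} and Corollary~\ref{cor: Hilbert series and reduced parity joins}, with the $(\emptyset,1)$ correction shifting one unit from $h_1$ to $h_3$, and parts (ii)--(iii) by combining Lemma~\ref{lem: effect of adding edge} with the range of $\tau$-values from Lemma~\ref{lem: tau for complete}, excluding every nonempty $T$ with $|T|\le n-2$ by means of an edge disjoint from $T$. Your explicit parity refinement (that the matching parity realises $|T|/2$) is a harmless sharpening of Lemma~\ref{lem: tau for complete}; the paper sidesteps it by noting that the value $\tau(T,p)+1$ falls outside the admissible range for $\tau(T\sd e,p+1)$, but both bookkeepings are sound.
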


\begin{proof}
(i) If $i\not = 0,1,3$, by Lemma~\ref{lem: tau for complete} and Corollary~\ref{cor: Hilbert series and reduced parity joins}, 
$h_i$ is equal to the number of $T\subseteq V_G$ of cardinality 
$2i-2$ plus those of cardinality $2i$, hence $h_i=\binom{n}{2i-2}+\binom{n}{2i}$. 
This lemma gives $h_0=1$, which also agrees with this formula. 
The cases $i=1,3$ are dealt with similarly.  
\medskip 

\noindent
(ii) The statements about the type of $\AG$ and the socle degrees follow from 
Theorem~\ref{thm: characterization of socle}. 
By Lemma~\ref{lem: tau for complete}, it is clear that $(\emptyset,1)$ is a socle pair (as well if $n$ is even). 
Let $T\subseteq V_G$ be such that $|T|=n-1$. Then, by Lemma~\ref{lem: tau for complete},
\begin{equation}\label{eq: L785}
\ts \{\tau(T,0),\tau(T,1)\}=\{\frac{n-1}{2},\frac{n+1}{2}\}.
\end{equation}
Set $p=\frac{n+1}{2}+2\ZZ$, so that $\tau(T,p) = \frac{n+1}{2}$. If $e\in E_G$ is any edge, 
then, since $n\geq 4$, $T\sd e \not = \emptyset$ and hence
$$
\ts \{\tau(T\sd e,p),\tau(T\sd e,p+1)\}\subseteq \{\frac{n-2}{2},\frac{n-1}{2},\frac{n}{2},\frac{n+1}{2}\}.
$$
Since $\frac{n+3}{2}$ is does not belong to the set on the right of the above inclusion, 
by Lemma~\ref{lem: effect of adding edge}, we must have 
$$
\ts \tau(T\sd e,p+1) = \frac{n-1}{2} = \frac{n+1}{2}-1 = \tau(T,p)-1
$$
and we conclude that $(T,p)$ is a socle pair. It remains to show
that, besides those listed, no other pair $(T,p)$ is a socle pair. 
Let $T\subseteq V_G$ have even cardinality and let $p\in \ZZ_2$. Assume that $(T,p)\not = (\emptyset,1)$ and that there exist 
$v\not = w \in V_G\setminus T$. If $e=\set{v,w}$, then
$T\sd e\not = \emptyset$ and thus 
$$
\renewcommand{\arraystretch}{1.4}
\left \{
\begin{array}{l}
\tau(T,p) \in \{\frac{|T|}{2},\frac{|T|}{2}+1 \} \quad \text{and}\\ 
\tau(T\sd e,p+1) \in \{\frac{|T|}{2}+1,\frac{|T|}{2}+2 \}
\end{array}
\right.
$$
so that $\tau(T\sd e, p+1) = \tau(T,p)+1$, i.e., $(T,p)$ is not a socle pair. Finally if 
$|T|=n-1$ and $p=\frac{n-1}{2}+2\ZZ$, in which case, by \eqref{eq: L785},
$\tau(T,p) = \frac{n-1}{2}$, let us choose $e\in E_G$, an edge incident to the vertex in $V_G\setminus T$. Then 
$|T\sd e| = |T|$ and therefore 
$$
\ts \tau(T\sd e,p+1)=\frac{n+1}{2}=\tau(T,p)+1,
$$
i.e., $(T,p)$ is not a socle pair.
\medskip 

\noindent
(iii) As already observed, $(\emptyset,1)$ is a socle pair. 
Let $e\in E_G$. Since,
by Lemma~\ref{lem: tau for complete},
$$
\renewcommand{\arraystretch}{1.4}
\left \{
\begin{array}{l}
\ts \tau(V_G,p) \in \set{\frac{n}{2},\frac{n}{2}+1} \quad  \text{and}\\
\tau(V_G\sd e , p+1) \in \set{\frac{n}{2}-1,\frac{n}{2}},
\end{array}
\right.
$$
we cannot have $\tau(V_G\sd e,p+1)=\tau(V_G,p)+1$. Hence, using 
Lemma~\ref{lem: effect of adding edge}, we deduce that both $(V_G,0)$ and $(V_G,1)$ are socle pairs.
If $T\subsetneq V_G$ is a set of vertices of even cardinality, then there exist $v\not = w \in V_G\setminus T$.
Therefore, arguing as above, we can exclude $(T,p)$, for any $p\in \ZZ_2$.
\end{proof}

\begin{rmks}
(i) If $G=\K_n$ with $n=2k$, we observe that $(h_1,\dots,h_r)$, the 
$h$-vector of $\AG$, is nearly symmetric. 
Indeed if $i\not\in \set{1,3,k-2,k}$, then
$$
\ts h_i=\binom{2k}{2i-2}+\binom{2k}{2i} = \binom{2k}{2k+2-2i} + \binom{2k}{2k-2i} = h_{r-i},
$$ 
where $r=k+1$ is the maximum socle degree, while 
$$
\renewcommand{\arraystretch}{1.4}
\left \{
\begin{array}{l}
h_1 = \binom{2k}{2},\quad h_k=\binom{2k}{2k-2}+\binom{2k}{2k} = h_1 +1\quad \text{and} \\
h_3 = 1+\binom{2k}{4}+\binom{2k}{6},\quad h_{k-2}=\binom{2k}{2k-6}+\binom{2k}{2k-4} = h_3-1.
\end{array}
\right.
$$
(ii) If $n=5$ the socle degrees are $3,3,3,3,3,3$, i.e., $\AG$ is level. This is the only 
value of $n$ for which $\AG$, with $G=\K_n$, is level. 
\end{rmks}

\begin{example}\label{exa: maximal reduced parity join}
Let us use $G=\K_6$ to give an example of a reduced parity join that is maximal 
(under the inclusion order) in the set of reduced 
parity joins but not in the set of parity joins. Fix an ordering of $E_G$. By Theorems~\ref{thm: socle degrees for complete} 
and \ref{thm: characterization of socle} the only reduced parity joins that are maximal in the set
of all parity joins have cardinalities $\tau(\emptyset,1)=3$, $\tau(V_G,0)=4$ and $\tau(V_G,1)=3$.
The union of these $3$ reduced parity joins has cardinality at most $3+4+3=10$. Let 
$e\in E_G$ be an edge that does not belong to this union. Then, trivially, 
$\{e\}$ is a reduced parity join. Let $J$ be a reduced parity join, maximal in the set of all reduced parity 
joins, that contains $e$. Then $J$ cannot be maximal in the set of parity joins.
\end{example}


\section{The case $G=\K_{a,b}$}\label{sec: complete bipartite}

Let us denote by $\K_{a,b}$ a complete bipartite graph, with a partition, 
$V_G=V_1\cup V_2$, of its vertex set into parts of cardinalities $a$ and 
$b$, respectively. We will compute 
the $h$-vector $(h_0,h_1,\dots,h_r)$ and the socle degrees of $\AG$, when $G=\K_{a,b}$ by 
following a similar path as in previous section, except that, 
because $G$ is bipartite, we may omit the parity in the notation $\tau(T,p)$.

\begin{lemma}\label{lem: tau for complete bipartite}
If $G=\K_{a,b}$, with $a,b\geq 2$, and if $T\subseteq V_G$ has even cardinality, then, 
setting $T_1=T\cap V_1$ and $T_2=T\cap V_2$, $\ts \tau(T) = \max\{|T_1|,|T_2|\}$.
\end{lemma}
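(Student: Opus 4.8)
The plan is to prove the two inequalities $\tau(T)\geq \max\{|T_1|,|T_2|\}$ and $\tau(T)\leq \max\{|T_1|,|T_2|\}$ separately: the first by a degree count over one side of the bipartition, the second by exhibiting an explicit $T$-join of exactly this cardinality. Throughout, recall that $\tau(T)$ is the minimum cardinality of a $T$-join and that, since $\K_{a,b}$ is connected, a $T$-join exists precisely because $|T|$ is even.

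First I would establish the lower bound. Since every edge of $\K_{a,b}$ has exactly one endpoint in $V_1$, any set of edges $J\subseteq E_G$ satisfies $|J|=\sum_{v\in V_1}\deg_J(v)$. If $J$ is a $T$-join, the vertices of $V_1$ of odd degree are exactly those of $T_1$, and each of those degrees is at least $1$; hence $|J|\geq |T_1|$. The symmetric argument over $V_2$ gives $|J|\geq |T_2|$, so every $T$-join has at least $\max\{|T_1|,|T_2|\}$ edges, whence $\tau(T)\geq \max\{|T_1|,|T_2|\}$.

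For the upper bound I would construct a $T$-join attaining this size. Assume without loss of generality $|T_1|\geq |T_2|$, write $T_1=\{u_1,\dots,u_m\}$ and $T_2=\{w_1,\dots,w_\ell\}$ with $m\geq\ell$, and note that $m-\ell$ is even because $|T|=m+\ell$ is even. I would take
$$
J = \set{\set{u_i,w_i} : 1\leq i\leq \ell} \cup \set{\set{u_j,w} : \ell+1\leq j\leq m},
$$
where the hub vertex $w\in V_2$ is chosen to be $w_1$ when $\ell\geq 1$ and any vertex of $V_2$ when $\ell=0$ (which exists since $b\geq 2$). These are $m$ distinct edges, all present in the complete bipartite graph.

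Finally I would verify that $J$ is a $T$-join. Each $u_i$ has degree $1$; the hub $w$ has degree $1+(m-\ell)$ (respectively $m-\ell$ when $\ell=0$), which is odd exactly when $m-\ell$ is even, i.e.\ precisely in the case $\ell\geq 1$ where $w=w_1\in T_2$ must be odd, and even in the case $\ell=0$ where $w\notin T$; every remaining $w_i$ has degree $1$; and all vertices outside $T$ have degree $0$. Thus the set of odd-degree vertices is exactly $T$, giving $\tau(T)\leq |J|=m=\max\{|T_1|,|T_2|\}$. The step demanding the most care is this parity bookkeeping at the hub, together with the degenerate case $\ell=0$; once the construction is pinned down, both inequalities are immediate.
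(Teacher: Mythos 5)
Your proof is correct and follows essentially the same strategy as the paper's: a degree-count over one side of the bipartition for the lower bound (which the paper leaves implicit), and an explicit $T$-join of size $\max\{|T_1|,|T_2|\}$ built from a matching plus a star through a hub vertex for the upper bound. The only cosmetic difference is that you fold the hub into the matching and verify the parities directly, whereas the paper keeps the star separate and invokes Lemma~\ref{lem: symetric difference of Ti-joins} to combine the two pieces.
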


\begin{proof}
Without loss of generality, let 
us assume that $|T_1|\leq |T_2|$  
and let us take any a subset of $T_2'\subseteq T_2$
of cardinality $|T_1|$. Then, as $|T|$ is even and $|T|=2|T_1|+|T_2\setminus T_2'|$,
we must have $|T_2\setminus T_2'|$ even.
If $T_2'=\emptyset$, let $J=\emptyset \subseteq E_G$; 
otherwise, let us take $J\subseteq E_G$, a matching of the vertices in $T_1\cup T'_2$.
If $T_2\setminus T'_2=\emptyset$, let us take $L=\emptyset \subseteq E_G$; 
otherwise take any vertex $v\in V_1$ and let $L$ be the set of edges incident to 
$v$ and to a vertex of $T_2\setminus T_2'$. 
Then, $J$ is a $(T_1\cup T'_2)$-join and $L$ is a $(T_2\setminus T'_2)$-join. 
By Lemma~\ref{lem: symetric difference of Ti-joins}, 
$J\sd L = J\cup L$ is a $T$-join. Since $J\cup L$ has cardinality 
$$
|T_2'|+|T_2\setminus T_2'| = |T_2| 
$$
and any $T$-join has at least $|T_2|$ edges, we conclude that $\tau(T)=|T_2|$.
\end{proof}

\begin{theorem}
Let $G=\K_{a,b}$ be a complete bipartite graph. 
\begin{enumerate}\renewcommand{\itemsep}{.2cm}
\item $\ts h_i = 
\bigl [\sum_{k\geq 0}\binom{a}{i-2k}\bigr ]  \bigl [\sum_{k\geq 0}\binom{b}{i-2k}\bigr ] 
- \bigl [\sum_{k\geq 1}\binom{a}{i-2k}\bigr ]  \bigl [\sum_{k\geq 1}\binom{b}{i-2k}\bigr ]$.
\item $T\subseteq V_G$, of even cardinality,
is a socle set if and only if 
$$|T_i|=\max\{|T_1|,|T_2|\} \implies T_i =V_i.$$
In particular, if $a< b$, the socle degrees are 
$$
\underbrace{a, \cdots, a}_{\sum\limits_{i\geq 1} \binom{b}{a-2i}},\underbrace{b, \cdots, b}_{\sum\limits_{i\geq 1} \binom{a}{b-2i}}
$$
and if $a=b$, then $\AG$ is level, of socle degree $a$ and type $2^a-1$.

\end{enumerate}

\end{theorem}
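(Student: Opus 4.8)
The plan is to treat the two items separately, in both cases reducing everything to the identity $\tau(T)=\max\{|T_1|,|T_2|\}$ of Lemma~\ref{lem: tau for complete bipartite} and to the combinatorics of the pair $(|T_1|,|T_2|)$, where $T_1=T\cap V_1$ and $T_2=T\cap V_2$.

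For item (i), I would start from Corollary~\ref{cor: Hilbert series and reduced parity joins}. Since $\K_{a,b}$ is connected, $\T_G$ is exactly the set of $T\subseteq V_G$ with $|T|$ even, and for a bipartite graph the parity is determined by $T$; hence $h_i$ equals the number of $T\in\T_G$ with $\tau(T)=i$. Writing $T=T_1\sqcup T_2$, the condition $T\in\T_G$ reads $|T_1|\equiv|T_2|\pmod 2$, while $\tau(T)=i$ reads $\max\{|T_1|,|T_2|\}=i$ by the Lemma. The key observation is that, under $\max\{|T_1|,|T_2|\}=i$ together with $|T_1|\equiv|T_2|$, both cardinalities are forced to satisfy $|T_1|\equiv|T_2|\equiv i\pmod 2$, because one of them equals $i$ and the two share a parity. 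I would therefore count by inclusion--exclusion: the pairs with $|T_1|,|T_2|\le i$ and $|T_1|\equiv|T_2|\equiv i\pmod 2$ number $\bigl[\sum_{k\ge0}\binom{a}{i-2k}\bigr]\bigl[\sum_{k\ge0}\binom{b}{i-2k}\bigr]$, and subtracting those with both cardinalities $\le i-2$ of the same parity, which number $\bigl[\sum_{k\ge1}\binom{a}{i-2k}\bigr]\bigl[\sum_{k\ge1}\binom{b}{i-2k}\bigr]$, leaves precisely the pairs with $\max\{|T_1|,|T_2|\}=i$. This yields the stated formula.

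For item (ii), I would combine the definition of socle set with Lemma~\ref{lem: effect of adding edge}: $T$ is a socle set if and only if $\tau(T\sd e)=\tau(T)-1$ for every edge $e$. Every edge has the form $e=\{u,v\}$ with $u\in V_1$, $v\in V_2$, so $T\sd e$ has parts $T_1\sd\{u\}$ and $T_2\sd\{v\}$, whence $\tau(T\sd e)=\max\{\,|T_1|\pm1,\ |T_2|\pm1\,\}$, the sign being $+$ exactly when the relevant vertex lies outside $T$. Setting $m=\max\{|T_1|,|T_2|\}$, adding $e$ raises $\tau$ to $m+1$ precisely when $|T_1|=m$ and $u\notin T_1$, or $|T_2|=m$ and $v\notin T_2$. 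Because $G$ is complete bipartite (and $a,b\ge 2$), every $u\in V_1$ is paired in an edge with every $v\in V_2$, so the condition ``no edge raises $\tau$'' decouples into a condition on $V_1$ and a condition on $V_2$ separately; it holds if and only if $|T_i|=m$ forces $T_i=V_i$. This is exactly the claimed characterization of socle sets.

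Finally, for the socle degrees I would count socle sets by their value $\tau(T)=m$. When $a<b$, only $T_2$ can attain $m=b$, giving $T_2=V_2$ with $T_1\subseteq V_1$ arbitrary subject to $|T_1|\equiv b\pmod 2$ (automatically $|T_1|<b$), counted by $\sum_{i\ge1}\binom{a}{b-2i}$; symmetrically, the socle sets with $m=a$ are those with $T_1=V_1$ and $|T_2|<a$, $|T_2|\equiv a\pmod 2$, counted by $\sum_{i\ge1}\binom{b}{a-2i}$. This gives the two socle degrees $a$ and $b$ with the stated multiplicities. When $a=b$ the same analysis forces $m=a$, and the socle sets split into the two symmetric families with exactly one full part, each of size $\sum_{i\ge1}\binom{a}{a-2i}=2^{a-1}-1$, together with $T=V_G$; hence $\AG$ is level of socle degree $a$ and type $2(2^{a-1}-1)+1=2^a-1$. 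The step I expect to require the most care is the bookkeeping at the boundary $|T_1|=|T_2|=m$: one must verify that this case forces $T=V_G$ and is counted exactly once, as this is precisely what avoids double counting, produces the subtracted terms, and yields the clean value $2^a-1$.
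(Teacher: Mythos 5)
Your proposal is correct and follows essentially the same route as the paper: both parts reduce to $\tau(T)=\max\{|T_1|,|T_2|\}$ from Lemma~\ref{lem: tau for complete bipartite}, with (i) obtained from Corollary~\ref{cor: Hilbert series and reduced parity joins} (your direct inclusion--exclusion is just the paper's three-case count after its algebraic recombination) and (ii) obtained from Lemma~\ref{lem: effect of adding edge} via the same characterization of when an edge raises $\tau$, followed by the same enumeration of socle sets, including the overlap $T=V_G$ when $a=b$.
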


\begin{proof}
(i) By Lemma~\ref{lem: tau for complete bipartite} and Corollary~\ref{cor: Hilbert series and reduced parity joins}, 
$h_i$ is the number of even cardinality subsets $T\subseteq V_G$
such that, if $T_1=T\cap V_1$ and $T_2=T\cap V_2$, $i=\max\{|T_1|,|T_2|\}$. 
The number of even cardinality $T\subset V_G$ such that $i=|T_1|>|T_2|$ is: 
$$
\ts \binom{a}{i}\sum_{k\geq 1} \binom{b}{i-2k}.
$$
Using symmetry and adding the case $|T_1|=|T_2|$ we deduce that 
$$
\renewcommand{\arraystretch}{1.4}
\begin{array}{rcl}
h_i & = & \binom{a}{i}\sum_{k\geq 1} \binom{b}{i-2k} + \binom{b}{i}\sum_{k\geq 1} \binom{a}{i-2k} + \binom{a}{i}\binom{b}{i}\\
 & = & \bigl [\binom{a}{i}+\sum_{k\geq 1}\binom{a}{i-2k}\bigr ]  \bigl [\binom{b}{i}+\sum_{k\geq 1}\binom{b}{i-2k}\bigr ] 
- \bigl [\sum_{k\geq 1}\binom{a}{i-2k}\bigr ]  \bigl [\sum_{k\geq 1}\binom{b}{i-2k}\bigr ]\\
 & = & \bigl [\sum_{k\geq 0}\binom{a}{i-2k}\bigr ]  \bigl [\sum_{k\geq 0}\binom{b}{i-2k}\bigr ] 
- \bigl [\sum_{k\geq 1}\binom{a}{i-2k}\bigr ]  \bigl [\sum_{k\geq 1}\binom{b}{i-2k}\bigr ].
\end{array}
$$

\noindent
(ii) Let us start by showing that $T$ is not a socle set if and only if there exists $i\in \{1,2\}$ such that 
\begin{equation}\label{eq: L841}
|T_i|=\max\{|T_1|,|T_2|\}\quad\text{and} \quad T_i\not = V_i.
\end{equation}
If $T$ is not a socle set, then
there exists $e\in E_G$ such that 
\begin{equation}\label{eq: L843}
\tau(T\sd e) = \tau(T)+1 = \max\{|T_1|,|T_2|\}+1.
\end{equation}
(\emph{cf}.~Lemmas~\ref{lem: effect of adding edge} and \ref{lem: tau for complete bipartite}).
Hence, for some $i\in \{1,2\}$ we have $(V_i\cap e)\cap T_i = \emptyset$ and 
$|T_i|=\max\{|T_1|,|T_2|\}$, which gives \eqref{eq: L841}. 
Conversely, suppose that \eqref{eq: L841} holds, for some $i\in \{1,2\}$.
Choose $v\in V_i\setminus T_i$ and $e\in E_G$, an edge incident to $v$. Then
\eqref{eq: L843} holds and therefore $T$ is not a socle set. 
\smallskip 

\noindent
Now, if $a\not = b$, then, by the above characterization, 
$$
\ts \sum_{i\geq 1} \binom{b}{a-2i} \quad \text{and}\quad \sum_{i\geq 1} \binom{a}{b-2i}
$$
are, respectively, the number of socle sets yielding socle degrees equal to $a$  
and the number of those yielding socle degrees equal to $b$.  
If $a=b$, the case of $T_1=V_1$, $T_2=V_2$ is not included in the previous 
counting, hence, in this case, the type of $\AG$ is then
$$
\ts 2\sum_{i\geq 1} \binom{a}{a-2i}+1 = 2(2^{a-1}-1)+1 = 2^a-1. \qedhere
$$
\end{proof}

\end{document}